\documentclass[11pt]{amsart}
\usepackage[margin=1.2in]{geometry}
\usepackage{amsmath}
\usepackage{amsthm}
\usepackage{color}
\usepackage{amsfonts}
\usepackage{amssymb}
\usepackage{amscd}
\numberwithin{equation}{section}
\newtheorem{theorem}{Theorem}[section]
\newtheorem{cor}[theorem]{Corollary}
\newtheorem{conj}[theorem]{Conjecture}
\newtheorem{remark}[theorem]{Remark}
\newtheorem{proposition}[theorem]{Proposition}
\newtheorem{prop}[theorem]{Proposition}
\newtheorem{lemma}[theorem]{Lemma}
\theoremstyle{definition}
\newtheorem{defi}{Definition}[section]
\newtheorem{rem}[defi]{Remark}

\newtheorem*{thma}{Theorem}

\newcommand\half{\frac{1}{2}}

\newcommand\g{\mathfrak g}
\newcommand\ga{\widehat{\mathfrak g}}
\newcommand\h{\mathfrak h}

\newcommand\D{\Delta}
\renewcommand\l{\lambda}
\newcommand\Dp{\Delta^+}

\renewcommand\d{\delta}
\renewcommand\r{\mathfrak r}

\renewcommand\a{\alpha}
\renewcommand\aa{\mathfrak a}

\renewcommand\k{\mathfrak k}

\newcommand\ganz{\mathbb Z}

\newcommand\s{\sigma}
\renewcommand\L{\Lambda}

\renewcommand\aa{\mathfrak a}

\newcommand\C{\mathbb C}

\newcommand\p{\mathfrak p}

\newcommand{\vac}{{\bf 1}}
\newcommand{\bea}{\begin{eqnarray}}
\newcommand{\eea}{\end{eqnarray}}
\begin{document}
\title[Kostant's pair of Lie type and conformal embeddings]{Kostant's pair of Lie type and conformal embeddings}
\author[Adamovi\'c, Kac, M\"oseneder, Papi, Per\v{s}e]{Dra{\v z}en~Adamovi\'c}
\author[]{Victor~G. Kac}
\author[]{Pierluigi M\"oseneder Frajria}
\author[]{Paolo  Papi}
\author[]{Ozren  Per\v{s}e}
\keywords{conformal embedding, vertex operator algebra, critical level, decomposition rules, pairs of Lie type}
\subjclass[2010]{Primary    17B69; Secondary 17B20, 17B65}

\begin{abstract} We deal with some  aspects of the theory of conformal embeddings of affine vertex algebras, providing a 
new proof of the Symmetric Space  Theorem and  a  criterion for conformal embeddings of equal rank subalgebras.
We finally study some  examples of embeddings at the critical level. We prove a criterion for embeddings at the critical level which enables us to prove equality of certain central elements.
\end{abstract}
\maketitle

\section{Introduction}
In this paper we deal with three aspects related to the notion of conformal embedding of affine vertex algebras. We provide the following three contributions:
\begin{enumerate}
\item  a proof of the Symmetric Space  Theorem via Kostant's theory of {\it pairs of Lie type};
\item a central charge criterion for conformal embeddings of equal rank subalgebras;
\item a detailed  study of some relevant examples of embeddings at the critical level.
\end{enumerate}

To illustrate  the previous topics, let us recall that if  
$\g$ is a semisimple finite-dimensional complex Lie algebra and $\k\subset \g$  a reductive subalgebra, the embedding $\k\hookrightarrow \g$ is  called conformal    if the central charge of the 
Sugawara construction for
the affinization $\ga$,  acting on a  integrable $\ga$-module of level $k$, equals that for $\widehat{\k}$. Then necessarily $k=1$ \cite{AGO}.
Maximal conformal embeddings were classified
in \cite{SW},  \cite{AGO}, and related decompositions can be found in \cite{KWW}, \cite{KS}, \cite{CKMP}.
 In the  vertex algebra framework the definition can be rephrased as follows: the simple affine vertex  algebras $V_1(\g)$ and its  vertex subalgebra  $\widetilde V(k,\k)$  generated by $\{x_{(-1)}\vac\mid x\in\k\}$ have the same
Sugawara conformal vector.\par
In \cite{AKMPP1} we generalized the previous situation to study when the simple affine vertex  algebra $V_k(\g)$ and its subalgebra of $\widetilde V(k,\k)$ have the same
Sugawara conformal vector  for some  non-critical level $k$, not necessarily $1$, assuming  $\k$ to be  a maximal  equal rank reductive subalgebra. The non equal rank case has been studied in \cite{AKMPP3}.

The conformal embeddings in $V_k(so(V))$ with $k\in \ganz_+$ and $V$ a finite dimensional representation are the subject of the Symmetric Space Theorem (see Theorem \ref{SST}), which is at the basis of the early history of the theory. 
In our  subsequent  work on conformal embeddings  (see in particular \cite[Section 3]{AKMPP3}) we used a special case of it, but  we later realized that it is indeed possible to use  Kostant's theory quoted above (together with other tools from the theory of affine and vertex algebras) to provide a new proof of the theorem in full generality. This is done in Section \ref{KT}.
\par
In Section \ref{ERE} we improve our results from \cite{AKMPP1}, where we mainly studied conformal embeddings of maximal equal rank subalgebras, by providing full details on a criterion  for conformal embedding in which we drop the maximality assumption.

The final section  is devoted to certain instances of the following general problem: 
 provide  the explicit decomposition of $V_k(\g)$ regarded as a $\widetilde V(k,\k)$-module.  This problem was studied in \cite{AKMPP1} and \cite{AKMPP3}. When $k$ is non-integral, it was noted  that the following situations can  occur:
\begin{itemize}
\item $V_k(\g)$ is a semisimple $\widetilde V(k,\k)$--module with finite or infinite decompositions.
\item  $\widetilde V(k,\k)$ is not simple and  $V_k(\g)$ contains subsingular vectors.
\item $\widetilde V(k,\k)$  is a vertex algebra at the critical level.
\end{itemize}
There are many examples for which  explicit decompositions are still unknown. Such cases are also connected with some recent physics paper and conjectures. 
In Section \ref{CL} we  present some results in the case when $\widetilde V(k,\k)$  is a vertex algebra at the critical level. We show that then $\widetilde V(k,\k)$ is never simple by constructing certain non-trivial central elements in $\widetilde V(k,\k)$.  In most cases,   the embedded vertex subalgebra is  a certain quotient of
 $( V^{-h^{\vee} } ({\g}_1 ) \otimes  V^{-h^{\vee}} ({\g}_2)    ) \slash   \langle s  \rangle $ where $s$ is a linear combination  of quadratic Sugawara central elements.

 \vskip 5mm
{\bf Acknowledgment.}
D.A. and O. P. are  partially supported by the Croatian Science Foundation under the project 2634 and by the
QuantiXLie Centre of Excellence, a project cofinanced
by the Croatian Government and European Union
through the European Regional Development Fund - the
Competitiveness and Cohesion Operational Programme
(KK.01.1.1.01).

\section{Conformal Embeddings}
 Let $\g$ be  a simple Lie algebra. Let $\h$ be a Cartan subalgebra, $\D$ the 
$(\g,\h)$-root system, $\Dp$  a set of positive roots and $\rho$ the corresponding Weyl vector. Let $(\cdot,\cdot)$ denote the   normalized bilinear invariant  form (i.e., $(\a,\a)=2$ for any long root).

We denote by $V^k(\g)$ the universal affine vertex algebra of level $k$. If $k+h^\vee\ne 0$ then $V^k(\g)$ admits a unique simple quotient that we denote by $V_k(\g)$. More generally, if  $\aa$ is  a reductive Lie algebra that decomposes as  $\aa=\aa_0\oplus\dots\oplus \aa_s$ with $\aa_0$ abelian and $\aa_i$ simple ideals for $i>0$ and $\mathbf{k}=(k_1,\ldots,k_s)$ is a multi-index of levels, we let
$$
V^{\mathbf{k}}(\aa)= V^{k_0}(\aa_0)\otimes\dots\otimes V^{k_s}(\aa_s),\quad
V_{\mathbf{k}}(\aa)= V_{k_0}(\aa_0)\otimes\dots\otimes V_{k_s}(\aa_s).
$$
We let $\vac$ denote the vacuum vector of both $V^{\mathbf{k}}(\aa)$ and $V_{\mathbf{k}}(\aa)$.

If $j>0$, let $\{x_i^j\},\{y_i^j\}$ be dual bases of $\aa_j$ with respect to the normalized invariant form of $\aa_j$ and $h^\vee_j$  its dual Coxeter number. For $\aa_0$, let $\{x_i^j\},\{y_i^j\}$ be dual bases with respect to any chosen nondegenerate form and set $h_0^\vee=0$.\par
Assuming that $k_j+h_j^\vee\ne 0$ for all $j$, we consider  $V^{\mathbf{k}}(\aa)$ and all its quotients, including $V_{\mathbf{k}}(\aa)$, as  conformal vertex algebras with conformal vector $\omega_\aa$ given by the Sugawara construction:
$$\omega_\aa=\sum_{j=0}^s\frac{1}{2(k_j+h_j^\vee)}\sum_{i=1}^{\dim \aa_j} :x^j_i y_i^j :.
$$

Recall that the central charge of $\omega_\g$
 for a simple or abelian Lie algebra $\g$ is, assuming $k+h^\vee\ne0$,
\begin{equation}\label{cc}
c(\g,k)=\frac{k\dim\g}{k+h^\vee}.
\end{equation}
If $\aa$ is a reductive Lie algebra, which decomposes as  $\aa=\aa_0\oplus\dots\oplus \aa_s$ then the central charge of $\omega_\aa$ is, for a multindex $\mathbf{k}=(k_0,\ldots,k_s)$,
\begin{equation}\label{ccc}
c(\aa,\mathbf{k})=\sum\limits_{j=0}^s c(\aa_j,k_j).
\end{equation}

\subsection{AP-criterion}\label{APC}
Assume that $\k$ is a Lie   subalgebra in a simple Lie algebra $\g$ which is reductive in $\g$. Then $\k$ decomposes as 
\begin{equation*}\label{decompg0}
\k=\k_0\oplus\cdots\oplus \k_t.
\end{equation*}
where 
$\k_1,\ldots \k_t$ are the simple ideals of $\k$ and $\k_0$ is the center of $\k$. 
Let $\p$ be  the orthocomplement of $\k$ in $\g$ w.r.t to $(\cdot, \cdot)$ and let 
$$\p=\bigoplus_{i=1}^N\left(\bigotimes_{j=1}^{t} V(\mu_i^j)\right)$$ be its  decomposition  as a $\k$-module.  Let  $( \cdot, \cdot)_j$ denote the normalized invariant
bilinear form on $\k_j$.  

We denote by $\widetilde V(k,\k)$ the vertex subalgebra of $V_{k}(\g)$ generated by $\{x_{(-1)}\vac\mid x\in\k\}$. Note that $\widetilde V(k,\k)$ is an affine vertex algebra, more precisely it is a quotient of $\otimes V^{k_j}(\k_j)$, with the levels $k_j$ determined by $k$ and the ratio between $(\cdot,\cdot)$ and $(\cdot,\cdot)_j$.

\begin{thma}(AP-criterion) \label{AP}\cite{A} {\it
 $ \widetilde{V}(k,\k)$ is
conformally embedded in $V_k({\g})$ if and only if
\bea\label{numcheck} && \sum_{j=0}^t\frac{ ( \mu_{i}^j , \mu _{i}^j + 2 \rho_0^j ) _j}{ 2 (k_j + h_j
^{\vee} )} = 1  \eea
for any $i=1,\ldots,s$.}
\end{thma}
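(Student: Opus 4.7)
The plan is to reduce conformal embedding to the vanishing of the operator $L_0^c := L_0^\g - L_0^\k$ on $V_k(\g)$, and then to read this off from the action on the degree--one generators. The reduction rests on the fact that the coset element $\omega^c := \omega_\g - \omega_\k$ satisfies $L_0^c\,\omega^c = 2\omega^c$; consequently $\omega^c = 0$ if and only if $L_0^c\equiv 0$ on all of $V_k(\g)$.

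The essential computation is that for every $v \in \p$ the vector $v_{(-1)}\vac \in V_k(\g)$ is $\widehat\k$--primary: the positive modes of $\k$ annihilate it by the Borcherds commutator formula $[x_{(m)}, v_{(-1)}] = \sum_{\ell \geq 0}\binom{m}{\ell}(x_{(\ell)}v)_{(m-\ell-1)}$, using $x_{(0)}v = [x,v] \in \p$, $x_{(1)}v = k(x,v)\vac = 0$ (by $\k \perp \p$), and $x_{(\ell)}v = 0$ for $\ell \geq 2$. Hence $\{v_{(-1)}\vac : v \in \p\}$ is a $\widehat\k$--primary copy of $\p$ inside $V_k(\g)$, and on each isotypic component $V(\mu_i) = \bigotimes_j V(\mu_i^j)$ the standard Sugawara evaluation gives
$$L_0^\k\,v_{(-1)}\vac = \Big(\sum_{j=0}^t \frac{(\mu_i^j,\mu_i^j + 2\rho_0^j)_j}{2(k_j + h_j^\vee)}\Big)\,v_{(-1)}\vac,$$
whereas $L_0^\g\,v_{(-1)}\vac = v_{(-1)}\vac$.

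The necessity of (\ref{numcheck}) is then immediate from $L_0^\g = L_0^\k$. For sufficiency, I would fix a basis of $\g$ adapted to the decomposition $\g = \k \oplus \bigoplus_i V(\mu_i)$; by hypothesis each such basis vector $x$ has common $L_0^\g$-- and $L_0^\k$--weight equal to $1$ (trivially for $x\in\k$, by (\ref{numcheck}) for $x\in V(\mu_i)$). Applying the general relation $[L_0, a_{(n)}] = (h_a - 1 - n) a_{(n)}$ to both $\omega_\g$ and $\omega_\k$ yields $[L_0^c, x_{(n)}] = 0$ for all $n \in \ZZ$; since $V_k(\g)$ is linearly spanned by monomials $x^1_{(-n_1)} \cdots x^r_{(-n_r)}\vac$ and $L_0^c\vac = 0$, induction on $r$ gives $L_0^c \equiv 0$ and hence $\omega^c = 0$.

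The only step requiring attention is the primary--vector claim for $v \in \p$: the vanishing of the $\ell = 1$ term in the Borcherds expansion is exactly where the defining orthogonality $\k \perp \p$ with respect to $(\cdot,\cdot)$ enters. Everything else is formal manipulation with the Sugawara construction and the PBW spanning set, and no case analysis on the types of $\g$ and $\k$ is needed.
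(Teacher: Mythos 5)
Your reduction of conformality to the vanishing of $L_0^c$ and your necessity argument are fine, and the computation that $v_{(-1)}\vac$ is $\widehat{\k}$--primary with $L_0^\k$--eigenvalue $\lambda_i:=\sum_j(\mu_i^j,\mu_i^j+2\rho_0^j)_j/2(k_j+h_j^\vee)$ is exactly the right key step. The gap is in the sufficiency direction, at the line ``$[L_0^c,x_{(n)}]=0$ for all $n$''. The relation $[L_0,a_{(n)}]=(h_a-1-n)a_{(n)}$ is the commutator formula $[\omega_{(1)},a_{(n)}]=(\omega_{(0)}a)_{(n+1)}+(\omega_{(1)}a)_{(n)}$ specialized under \emph{two} hypotheses: $\omega_{(1)}a=h_a a$ and $\omega_{(0)}a=Ta$. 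For $\omega_\g$ both hold, but $\omega_\k$ is a conformal vector only for the subalgebra $\widetilde V(k,\k)$: its zero mode agrees with the translation operator on $\widetilde V(k,\k)$, not on all of $V_k(\g)$. Indeed, for $v\in\p$ a direct computation with the normal ordering gives
$$(\omega_\k)_{(0)}v_{(-1)}\vac=\sum_j\frac{1}{2(k_j+h_j^\vee)}\sum_i\left((x_i^j)_{(-1)}([y_i^j,v])_{(-1)}+(y_i^j)_{(-1)}([x_i^j,v])_{(-1)}\right)\vac,$$
which is not $v_{(-2)}\vac$: in $V^k(\g)$ these vectors are linearly independent, and their equality in $V_k(\g)$ is essentially equivalent to the conclusion you are trying to prove, so it cannot be assumed. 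Hence $[L_0^\k,v_{(n)}]$ contains the uncontrolled extra term $\bigl((\omega_\k)_{(0)}v_{(-1)}\vac-v_{(-2)}\vac\bigr)_{(n+1)}$ and the induction on PBW monomials does not get off the ground.

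The repair --- which is the argument of \cite{A}, reused by the paper in Section \ref{ERE} and in the proof of Proposition \ref{critical-criterion} --- is to work with the vector $\omega^c=\omega_\g-\omega_\k$ rather than with the operator $L_0^c$. By skew-symmetry your degree-one computation says precisely that $v_{(1)}\omega^c=(1-\lambda_i)v_{(-1)}\vac$ for $v\in V(\mu_i)$, which vanishes under \eqref{numcheck}, while $x_{(1)}\omega^c=0$ for $x\in\k$ holds unconditionally; together with the easy weight-zero check $x_{(2)}\omega^c=0$ and the trivial vanishing of $x_{(n)}\omega^c$ for $n\ge 3$, this shows that $\omega^c$ is annihilated by all modes $x_{(n)}$ with $n\ge 1$, $x\in\g$. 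Therefore the ideal of $V^k(\g)$ generated by $\omega^c$ lies in positive conformal weights, so it is proper, and the simplicity of $V_k(\g)$ forces $\omega^c=0$ there. The missing ingredient in your write-up is thus the appeal to simplicity of $V_k(\g)$ applied to the singular vector $\omega^c$; this is what replaces the faulty commutator propagation from degree one to all degrees.
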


We reformulate the criterion highlighting  the dependence from the choice of the form on $\k$.  Fix  an invariant nondegenerate symmetric form $B_\k$  on $\k$. Then $(\cdot,\cdot)_{|\k_j\times \k_j} =n_j(B_\k)_{|\k_j\times \k_j}$.  Let $\{X_i\}$ be an orthonormal basis  of $\k_j$ with respect to $B_\k$ and let $C_{\k_j}= \sum_i X_i^2$ be the corresponding Casimir operator. Let $2 g^{B_\k}_j$ be the eigenvalue for the action of $C_{\k_j}$ on $\k_j$ and $\gamma_{ji}^{B_\k}$ the eigenvalue of the action of $C_{\k_j}$ on $V(\mu^j_i)$. Then  $ \widetilde{V}_{k}(\k)$ is
conformally embedded in $V_k({\g})$ if and only if
\bea\label{numcheck2} && \sum_{j=0}^t\frac{ \gamma_{ji}^{B_\k}}{ 2 (n_jk+ g_j^{B_\k} )} = 1  \eea
for any $i=1,\ldots,s$.

\begin{cor}\label{Actsscalar}Assume $\k$ is simple, so that $\k=\k_1$. 
Then there is $k\in\C$ such that $\widetilde V(k,\k)$ is conformally embedded in $V_k(\g)$ if and only if $C_\k$ acts scalarly on $\p$.
\end{cor}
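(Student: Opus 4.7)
The plan is to unpack the reformulated AP-criterion \eqref{numcheck2} in the case when $\k=\k_1$ is simple, so that $t=1$ and only one summand appears on the left-hand side. Writing $n=n_1$ and $g=g_1^{B_\k}$ for brevity, and decomposing $\p=\bigoplus_{i=1}^{N} V(\mu_i)$ into $\k$-irreducibles, \eqref{numcheck2} becomes the family of equations
$$
\gamma_i^{B_\k}\;=\;2(nk+g),\qquad i=1,\dots,N,
$$
where $\gamma_i^{B_\k}=\gamma_{1i}^{B_\k}$ is the eigenvalue by which $C_\k$ acts on the irreducible summand $V(\mu_i)$ of $\p$ (a scalar, by Schur's lemma).

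The essential observation, which is in effect the whole content of the corollary, is that the right-hand side of this system does not depend on $i$. So in the forward direction, if such a $k\in\C$ exists, the scalars $\gamma_i^{B_\k}$ must all coincide with the common value $2(nk+g)$; since $C_\k$ already acts as the scalar $\gamma_i^{B_\k}$ on each $V(\mu_i)$, it acts as a single scalar on $\p$. Conversely, if $C_\k$ acts on $\p$ as a scalar $\gamma\in\C$, the linear equation $\gamma=2(nk+g)$ is solved by $k=(\gamma-2g)/(2n)$, and for this value \eqref{numcheck2} is satisfied identically in $i$; the AP-criterion then produces the required conformal embedding of $\widetilde V(k,\k)$ in $V_k(\g)$.

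The only minor point to watch is that the $k$ produced in the converse be non-critical, so that the denominator $2(nk+g)$ in \eqref{numcheck2} is nonzero and the Sugawara construction applies; this is implicit in the very writing of \eqref{numcheck2} and causes no substantive difficulty. Apart from this technicality, there is no real obstacle: the proof reduces to the remark that the denominator on the left-hand side of \eqref{numcheck2} is independent of the index of the irreducible summand of $\p$, which forces the Casimir eigenvalues on those summands to coincide exactly when the criterion can be met.
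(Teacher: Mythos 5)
Your proposal is correct and follows essentially the same route as the paper: both directions come down to the observation that for simple $\k$ the criterion \eqref{numcheck2} reduces to the single family of equations $\gamma_i^{B_\k}=2(n_1k+g_1^{B_\k})$ with right-hand side independent of $i$, giving the forward implication immediately and the converse by solving for $k$. The paper's proof is just a terser version of the same argument.
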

\begin{proof}If $\widetilde V(k,\k)$ is conformally embedded in $V_k(\g)$ then, 
by \eqref{numcheck2}, $\gamma_i^1=2(k+g_1)$ is independent of  $i$.
If $C_\k$ acts scalarly on $\p$, then, solving \eqref{numcheck2} for $k$, one finds a level where, by AP-criterion, conformal embedding occurs.
\end{proof}

\section{Symmetric Space Theorem}\label{KT}
The Symmetric Space Theorem has originally been proved in \cite{GNO}; an algebraic approach can be found in \cite{Dab}. We recall this theorem
in a vertex algebra formulation.

\begin{theorem}[{\bf Symmetric Space Theorem}]\label{SST}
Let $K$ be a compact, connected, non-trivial Lie group with complexified Lie algebra $\mathfrak k$ and let $\nu:K\to End(V)$ be a finite dimensional faithful representation admitting a $K$-invariant symmetric nondegenerate form.

Then there is a conformal  embedding of $\widetilde V(k,\mathfrak k)$ in $V_k(so(V))$ with $k\in\ganz_+$ if and only if $k=1$ and 
there is a Lie algebra structure on $\mathfrak r=\mathfrak k\oplus V$ making $\mathfrak r$ semisimple, $(\mathfrak r,\mathfrak k)$ is a symmetric pair, and a nondegenerate invariant form on $\mathfrak r$ is given by the direct sum of an invariant form on $\mathfrak k$ with the chosen $K$-invariant form on $V$.
\end{theorem}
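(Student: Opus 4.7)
\emph{Plan.} The two directions are handled separately. The \emph{if} direction is the classical symmetric space conformal embedding, and can be checked directly by the AP criterion \eqref{numcheck} at $k=1$. Given a symmetric pair $(\r,\k)$ with $\r$ semisimple and $V$ the $(-1)$-eigenspace of the involution, $\ad$ embeds $\k$ into $so(V)$ preserving the inherited form, and for each $\k$-irreducible summand $V(\mu_i^j)$ of $\p=so(V)/\k$ one evaluates $(\mu_i^j,\mu_i^j+2\rho_0^j)_j$ by combining $[V,V]\subseteq\k$ with the fact that $C_\r$ acts on the adjoint representation by $2h^\vee_\r$. Substituting into \eqref{numcheck} yields exactly $1$ at $k=1$.

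\emph{Only if direction.} The strategy is to recognise $(\k,V)$ as a \emph{pair of Lie type} in Kostant's sense and to reconstruct $\r=\k\oplus V$ together with its symmetric pair structure. Since $\nu:\k\to so(V)$ is faithful and preserves the nondegenerate symmetric form on $V$, fixing an invariant form $B_\k$ on $\k$ gives a canonical $\k$-equivariant alternating map $\mu:\Lambda^2 V\to\k$ via $B_\k(\mu(v,w),X)=B_V(Xv,w)$. Setting $[X,v]:=Xv$ and $[v,w]:=\mu(v,w)$ extends the bracket on $\k$ to $\r=\k\oplus V$, and the only nontrivial Jacobi identity is the one on $V\times V\times V$:
\[
\mu(u,v)\cdot w + \mu(v,w)\cdot u + \mu(w,u)\cdot v = 0 \qquad (u,v,w\in V),
\]
which by Kostant's theorem translates into a Casimir eigenvalue condition on the $\k$-isotypic pieces of $V\otimes V$.

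The bridge to the hypothesis is the AP criterion, sharpened by Corollary \ref{Actsscalar}: conformal embedding forces $C_\k$ to act by $2(k_j+h^\vee_j)$ on every $\k_j$-irreducible summand of $\p\subseteq\Lambda^2 V\cong so(V)$, while on the copy $\k\subset\Lambda^2 V$ it acts by $2h^\vee_j$. Writing $C_\k$ on $V\otimes V$ as $C_\k\otimes 1 + 1\otimes C_\k + 2\Omega$ with $\Omega=\sum_a X_a\otimes X^a$, and decomposing $V\otimes V = S^2 V\oplus\Lambda^2 V$, these eigenvalues pin down the action of $\Omega$, and feeding this back into Kostant's criterion produces exactly the vanishing required for Jacobi. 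The integrality $k\in\ganz_+$ is used at the final scalar matching, forcing $k=1$ as the only value compatible both with the Dynkin-index normalisation of $\k\hookrightarrow so(V)$ and with the AP eigenvalues.

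With Jacobi in place, $\r$ is a Lie algebra with involution $\theta=\mathrm{id}_\k\oplus(-\mathrm{id}_V)$, so $(\r,\k)$ is a symmetric pair by construction; the form $B_\k\oplus B_V$ is nondegenerate and invariant on $\r$, and the faithfulness of $\nu$ kills the centre, so a standard quadratic-Lie-algebra argument yields that $\r$ is semisimple. I expect the main obstacle to be the representation-theoretic step matching Kostant's Jacobi identity with the AP Casimir eigenvalues, especially the careful bookkeeping across multiple $\k_j$-summands and the integrality argument that forces $k=1$.
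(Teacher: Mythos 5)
Your skeleton for the ``only if'' direction --- recast the conformal embedding as a Casimir eigenvalue condition via the AP criterion, then invoke Kostant's pair-of-Lie-type machinery to build $\r=\k\oplus V$ --- is the same as the paper's, but you have left the two genuinely hard steps as assertions. First, Kostant's criterion says $(\k,\nu)$ is of Lie type iff $\nu_*(C^{B_\k}_\k)+v^2\in\C$ for some $v\in(\bigwedge^3\p)^\k$, and the pair is \emph{symmetric} iff one can take $v=0$, i.e.\ iff the degree-$4$ component $\sum_i\tau^{-1}(X_i)\wedge\tau^{-1}(X_i)$ of $\nu_*(C^{B_\k}_\k)$ vanishes. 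Saying that the AP eigenvalues ``pin down $\Omega$'' and ``produce exactly the vanishing required for Jacobi'' is precisely the claim that needs proof: the paper's Lemma \ref{critKost} establishes it by an explicit computation in $\bigwedge V$ showing that this element $E$ satisfies $E=-\tfrac{k}{2}E$, whence $E=0$ because $k\in\ganz_+$ rules out $k=-2$. This is where the positivity of $k$ really enters, not in a ``Dynkin-index normalisation'' at the end. Second, your claim that faithfulness of $\nu$ kills the centre of $\r$ and hence gives semisimplicity is false as stated: central directions (or abelian ideals) of $\r$ can sit inside $V$, namely inside $V^\k$, on which $\k$ acts trivially even though $\nu$ is faithful on $\k$. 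The paper must prove $V^\k=\{0\}$ separately (Lemma \ref{invariants}), and does so by a vertex-algebra argument on the integrable module $L(k\Lambda_0+\omega_1)$; only then does Kostant's Theorem 1.60 yield semisimplicity. Finally, $k=1$ is obtained in the paper from the existence of a conformal-weight-$2$ singular vector in $V^k(so(V))$, which forces $k\le 1$; your proposal gives no workable substitute for this step.

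For the ``if'' direction your plan (verify \eqref{numcheck} summand by summand on $\p\subset\bigwedge^2V$) is genuinely different from the paper's and harder than you suggest: the symmetric-pair structure controls the Casimir of $\k$ on $V$, not on the $\k$-irreducible components of $so(V)\ominus\nu(\k)$, so the evaluation of $(\mu_i^j,\mu_i^j+2\rho_0^j)_j$ you describe is not immediate. The paper instead observes that at $k=1$ unitarity of the integrable module reduces conformality to equality of central charges (no nontrivial unitary Virasoro representation has zero central charge, and $V_1(so(V))$ is simple), and then proves the central charge identity \eqref{t1} by combining Kac's very strange formula for the involution with the strange formula for $\k$. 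As written, your verification is not carried out and is not obviously completable along the lines you indicate.
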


In this section we give a proof of the above theorem using Kostant's theory of pairs of Lie type and Kac's {\it very strange formula} as main tools.

We start by showing that if $\mathfrak r=\mathfrak k\oplus V$ gives rise to a symmetric pair $(\mathfrak r,\mathfrak k)$, then $\widetilde V(1,\mathfrak k)$ embeds conformally in $V_1(so(V))$.
 
 It suffices to show that 
\begin{equation}\label{cequal}
\sum_jc(\k_j,k_j)=c(\g,k).
\end{equation}

In fact, the action of $(\omega_\g-\omega_\k)_{(n)}$ on $V_1(so(V))$ defines a unitary representation of the Virasoro algebra. The Virasoro algebra has no nontrivial unitary representations with zero central charge, hence, if \eqref{cequal} holds, then $(\omega_\g-\omega_\k)_{(n)}$ acts trivially on $V_1(so(V))$ for all $n$. Since $V_1(so(V))$ is simple, this implies that $\omega_\g=\omega_\k$.

The normalized invariant form on $so(V)$ is given by $(X,Y)=\half tr(XY)$. Let $\k_{re}$ be the Lie algebra of $K$. Since $V$ admits a nondegenerate symmetric invariant form, there is a real subspace $U$ of $V$ which is $K$-stable and $V=U\oplus \sqrt{-1}U$. Choose an orthonormal basis $\{u_i\}$ of $U$ with respect to a $K$-invariant inner product on $U$. Then the matrix $\pi(X)$ of the action of $X\in\k_{re}$  on $V$ in this basis is real and antisymmetric. It follows that $tr(\pi(X)^2)\ne 0$ unless $X=0$, 
thus $(\cdot,\cdot)_{|\k\times \k}$ is nondegenerate.

Using $(\cdot,\cdot)_{|\k\times\k}$ as invariant form on $\k$ we have
$$
\sum_jc(\k_j,k_j)=\sum_j\frac{\dim\k_j}{1+g_j}
$$
while it is easy to check that $c(so(V),1)=\half\dim V$. Thus we need to check that
\begin{equation}\label{t1}
\sum_j\frac{\dim\k_j}{1+g_j}-\half\dim V=0.
\end{equation}

Let $\sigma$ denote the involution defining the pair. We can  assume that $\sigma$ is indecomposable, for, otherwise, one can restrict to each indecomposable ideal.



Let 
 $\kappa(\cdot,\cdot)$ be the Killing form on $\mathfrak r$. Since $\s$ is assumed indecomposable, the center $\k_0$ of $\k$ has dimension at most one. It follows that  $\kappa(\cdot,\cdot)_{|\k_j\times\k_j}=r_j(\cdot,\cdot)_{|\k_j\times\k_j}$ for some $r_j\in\C$, $j\ge 0$. Recall that  $(x,y)=\half tr_V(xy)$.
Note that,  
\begin{align*}
r_j\dim\k_j&=\sum_{i=1}^{\dim \k_j}\kappa(x^j_i,y^j_i)=2g_j\dim \k_j+\sum_{i=1}^{\dim \k_j}tr_V(x_i^jy_i^j)\\
&=2g_j\dim \k_j+\sum_{i=1}^{\dim \k_j}2(x_i^j,y_i^j)=2(g_j+1)\dim \k_j,
\end{align*}
hence $r_j=2(g_j+1)$. 

If we choose $B_\k=\kappa(\cdot,\cdot)_{|\k\times \k}$ as invariant form on $\k$ then $C^{B_\k}_{\k_j}=\frac{1}{r_j}C_{\k_j}$ so the eigenvalue of the action of $C^{B_\k}_{\k_j }$ on $\k_j$ is $\frac{g_j}{g_j+1}$.

We write now the {\it very strange}Ê formula  \cite[(12.3.6)]{Kac} in the form given in \cite[(55)]{Dop} for the  involution $\sigma$ using  the Killing form as invariant form on  $\mathfrak r$. Since $\sigma$ is an involution we have that $\rho_\sigma=\rho_\k$ and $z(\mathfrak r,\sigma)=\frac{\dim V}{16}$, hence the formula reads:
$$ \kappa(\rho_{\k},\rho_{\k})=
\frac{\dim\mathfrak r}{24}-\frac{\dim V}{16}, 
$$
On the other hand, using the strange formula for $\k$ (in the form of (54) in \cite{Dop}), we have
$$ \kappa(\rho_{\k},\rho_{\k})=
\sum_{j=1}^t\frac{g_j\dim\mathfrak k_j}{24(1+g_j)} 
$$
Equating these last two expressions one derives \eqref{t1}.

\vskip5pt
We now prove the converse implication: assume that there is a conformal  embedding of $\widetilde V(k,\mathfrak k)$ in $V_k(so(V))$ with $k\in\ganz_+$.
Let  $\nu:\k\to so(V)$ be the embedding.  Let also $\tau:\bigwedge^2 V \to so(V)$ be  the $\k$-equivariant isomorphism such that $\tau(u)(v)=-2i(v)(u)$, where $i$ is the contraction map, extended to $\bigwedge V$ as an odd derivation. More explicitly
$
\tau^{-1}(X)=\frac{1}{4}\sum_i X(v_i)\wedge v_i,
$
where $\{v_i\}$ is an orthonormal basis of $V$.

Choose an invariant nondegenerate symmetric form $B_{\k}$   on $\k$. Recall that $\p$ is the orthocomplement of $\k$ in $so(V)$ with respect to $(\cdot,\cdot)$. Let $p_\k$, $p_{\mathfrak p}$ be the orthogonal projections of $\bigwedge^2 V$ onto $\k$ and $\mathfrak p$ respectively.


We claim that we can choose $B_\k$ so that 
\begin{equation}\label{ennej}
2(n_jk+g^{B_\k}_j)=1
\end{equation} 
(see Section 2 for notation). Indeed, it is clear that  $n_jg_j=g_j^{B_\k}$, hence it is enough to choose 
$$
n_j=\frac{1}{2(k+g_j)}.
$$
With this choice for $B_\k$ we have that, by \eqref{numcheck2},
$$
\sum_j \gamma_{ji}^{B_\k}=1.
$$
In particular, $C^{B_\k}_\k$ acts on $\otimes_{j=1}^t V(\mu^j_i)$ as the identity for all $i$. Thus $C^{B_\k}_\k$ acts as the identity on $\p$.

Set $\nu_*=\tau^{-1}\circ\nu$ and let $Cl(V)$ denote the Clifford algebra of $(V,\langle\cdot,\cdot\rangle)$. Extend $\nu_*$ to a Lie algebra homomorphism $\nu_*: \k\to Cl(V)$, hence to a homomorphism of associative algebras  $\nu_*: U(\k)\to Cl(V)$.
  Consider  $\nu_*(C^{B_\k}_\k)=\sum_i\nu_*(X_i)^2$. Also recall that a pair $(\k,\nu)$ consisting of a 
Lie algebra $\k$ with a bilinear symmetric invariant form $B_\k$ and of a representation $\nu:\k\to so(V)$  is said to be 
 {\it of Lie type} if there is a Lie algebra structure on $\mathfrak r=\k\oplus V$, extending that of $\k$, such that 1) $[x,y]=\nu(x)y,\,x\in\k,\,y\in V$ and 2) the bilinear  form 
 $B_{\mathfrak r}=B_\k\oplus \langle \cdot, \cdot \rangle$ is $ad_{\mathfrak r}$-invariant. In \cite[Theorem 1.50]{Kost}  Kostant proved that $(\k,\nu)$ is a  pair of Lie type if and only if 
 there exists $v\in (\bigwedge^3\p)^\k$ such that 
 \begin{equation}\label{nu}\nu_*(C^{B_\k}_\k)+v^2\in\C.\end{equation} Moreover he proved in \cite[Theorem 1.59]{Kost} that $v$ can be taken to be $0$ if and only if $(\k\oplus V,\k)$ is a symmetric pair.
 
 \begin{lemma}\label{critKost}
Assume that $\widetilde V(k,\k)$ embeds conformally in $V_k(so(V))$ with $k\in \ganz_+$. 
Then
$$
\sum_i \tau^{-1}(X_i)\wedge \tau^{-1}(X_i)=0
$$
where $\{X_i\}$ is an orthonormal basis of $\k$ with respect to $B_\k$.
\end{lemma}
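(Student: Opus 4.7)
The strategy is to leverage a fermionic realization of $V_k(so(V))$ and compare the ``four-fermion'' component of the Virasoro identity $\omega_\k=\omega_{so(V)}$, which holds in $V_k(so(V))$ by the hypothesis. Recall that $V_1(so(V))$ embeds into the neutral free fermion vertex superalgebra $F(V)$, with the current $J(X)$ for $X\in so(V)$ corresponding to the state $\tau^{-1}(X)\in\bigwedge^2 V\subset F(V)$. Under this embedding the Sugawara conformal vector coincides with the natural Virasoro vector $\omega^F=\tfrac12\sum_i\psi^i_{-3/2}\psi^i_{-1/2}\vac$, whose key property is the absence of any component in $\bigwedge^4 V$. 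For general $k\in\ganz_+$ I realize $V_k(so(V))$ inside $F(V)^{\otimes k}$ via the diagonal embedding of currents, so that the hypothesis becomes a state identity in $F(V)^{\otimes k}$.

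I project this identity onto the subspace of $F(V)^{\otimes k}$ spanned by states realized by four fermionic creation modes from a single fixed tensor factor. On the $\omega_{so(V)}$ side only the diagonal $a=b$ cross-terms $\sum_\alpha :J^{(a)}(x_\alpha)J^{(a)}(y_\alpha):$ contribute to this subspace; each such sum is the level-one Sugawara vector of the $a$-th factor up to scalar, hence equals $\omega^F$, whose $\bigwedge^4 V$ component is zero. On the $\omega_\k$ side the analogous computation produces the weighted combination
\[
\sum_j \frac{1}{2(k_j+g_j)}\,q_j,\qquad q_j:=\sum_i \tau^{-1}(X_i^{(j)})\wedge\tau^{-1}(X_i^{(j)}),
\]
with $\{X_i^{(j)}\}$ an orthonormal basis of $\k_j$ with respect to $(\cdot,\cdot)_j$. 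Equating the two components forces this combination to vanish.

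To conclude, observe that for any $\omega\in\bigwedge^2 V$ the Clifford product satisfies $\omega^2=\omega\wedge\omega+(\text{scalar})$, so the $\bigwedge^4 V$ component of $\nu_*(C^{B_\k}_\k)=\sum_i \nu_*(X_i)^2\in Cl(V)$ (with $\{X_i\}$ a $B_\k$-orthonormal basis of $\k$) is precisely $\sum_i \tau^{-1}(X_i)\wedge\tau^{-1}(X_i)$. Under the chosen normalization $n_j=1/(2(k+g_j))$ of $B_\k$ made earlier in the section, a direct scaling computation expresses this component as a linear combination of the $q_j$ that coincides with the vanishing weighted sum of the previous step, yielding the lemma.

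The principal obstacle is the normalization bookkeeping: one must verify that the coefficients $1/(2(k_j+g_j))$ extracted from the Sugawara sum defining $\omega_\k$ and the rescalings produced by passing from a $(\cdot,\cdot)_j$-orthonormal basis of $\k_j$ to a $B_\k$-orthonormal basis combine exactly to produce the $\bigwedge^4 V$ component of $\nu_*(C^{B_\k}_\k)$, with no residual factors coming from the Dynkin indices of the embeddings $\k_j\hookrightarrow so(V)$.
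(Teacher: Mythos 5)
Your argument is sound and takes a genuinely different route from the paper's. The paper stays entirely inside $\bigwedge V$: writing $Q=\sum_i\tau^{-1}(X_i)\wedge\tau^{-1}(X_i)$, it expands $Q$ in an eigenbasis of $C^{B_\k}_\k$ on $V$, splits $v_j\wedge v_r$ along $\bigwedge^2V=\k\oplus\p$ (conformality enters only through \eqref{ennej}, which makes $C^{B_\k}_\k$ act as the identity on $\p$), computes the projections $p_{\k_s}$ explicitly, and arrives at the self-referential identity $Q=-\tfrac k2 Q$, whence $Q=0$ because $k>0$. You instead use the full state identity $\omega_\k=\omega_{so(V)}$ transported to $F(V)^{\otimes k}$ and extract its quartic single-factor component, delegating the vanishing on the $so(V)$ side to the classical fact that $V_1(so(V))\subset F(V)$ is conformal (no circularity there, since that fact is proved by direct OPE computation). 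Your route is shorter and more conceptual --- it exhibits the vanishing as inherited from the free-fermion realization --- at the price of invoking that realization and of the normalization bookkeeping, which does close: with $n_j=1/(2(k+g_j))$ one has $n_j/\iota_j=1/(2(k_j+h_j^\vee))$, where $\iota_j$ is the Dynkin index of $\k_j\subset so(V)$, so the $B_\k$-orthonormal sum $\sum_i\tau^{-1}(X_i)\wedge\tau^{-1}(X_i)$ equals exactly $\sum_j\tfrac{1}{2(k_j+h_j^\vee)}q_j$, the quantity your projection annihilates. Two minor points: your displayed weights should read $1/(2(k_j+h_j^\vee))$ rather than $1/(2(k_j+g_j))$ when $\{X_i^{(j)}\}$ is $(\cdot,\cdot)_j$-orthonormal, since in the paper's notation $g_j=h_j^\vee/\iota_j$; and the detour through $\nu_*(C^{B_\k}_\k)$ in your final paragraph is unnecessary, as the identification is a direct rescaling of bases. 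It is also worth noting that your proof uses $k\in\ganz_+$ only to have the realization inside $F(V)^{\otimes k}$, whereas the paper's computation uses positivity of $k$ only at the very last step.
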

\begin{proof}
 Since $C^{B_\k}_\k$ is diagonalizable on $V$ and it is symmetric with respect to $\langle\cdot,\cdot\rangle$, there is an orthonormal basis $\{v_{i}\}$ of $V$ made of eigenvectors for $C^{B_\k}_\k$. Let $\lambda_i$ be the eigenvalue corresponding to $v_i$.
 Then 
\begin{align*}
\sum_i \tau^{-1}&(X_i)\wedge \tau^{-1}(X_i)=\frac{1}{16}\sum_{i,j,r} X_i(v_j)\wedge v_j\wedge X_i(v_r)\wedge v_r\\
&=-\frac{1}{16}\sum_{i,j,r} X_i(v_j)\wedge X_i(v_r)\wedge v_j\ \wedge v_r=-\frac{1}{32}\sum_{i,j,r} X^2_i(v_j\wedge v_r)\wedge v_j\ \wedge v_r\\&+\frac{1}{32}\sum_{i,j,r} X^2_i(v_j)\wedge v_r\wedge v_j\ \wedge v_r+\frac{1}{32}\sum_{i,j,r} v_j\wedge X^2_i( v_r)\wedge v_j\ \wedge v_r\\&=-\frac{1}{32}\sum_{i,j,r} X^2_i(v_j\wedge v_r)\wedge v_j\ \wedge v_r+\frac{1}{32}\sum_{j,r} \l_jv_j\wedge v_r\wedge v_j\ \wedge v_r\\&+\frac{1}{32}\sum_{j,r} v_j\wedge \l_r v_r\wedge v_j\ \wedge v_r=-\frac{1}{32}\sum_{i,j,r} X^2_i(v_j\wedge v_r)\wedge v_j\ \wedge v_r.
\end{align*}

Recall that we chose the form $B_\k$ so that $C^{B_\k}_\k$ acts as the identity on $\p$, thus we can write
\begin{align*}
\sum_{i,j,r} X^2_i(v_j\wedge v_r)\wedge v_j\ \wedge v_r&=\sum_{i,j,r} X^2_i(p_{\k}(v_j\wedge v_r))\wedge v_j\ \wedge v_r+\sum_{i,j,r} X^2_i(p_{\mathfrak p}(v_j\wedge v_r))\wedge v_j\ \wedge v_r\\
&=\sum_{j,r,s} 2g_s^{B_\k}p_{\k_s}(v_j\wedge v_r)\wedge v_j\ \wedge v_r+\sum_{j,r} p_{\mathfrak p}(v_j\wedge v_r)\wedge v_j\ \wedge v_r\\
&=\sum_{j,r,s} (2g_s^{B_\k}-1)p_{\k_s}(v_j\wedge v_r)\wedge v_j\ \wedge v_r+\sum_{j,r} v_j\wedge v_r\wedge v_j\wedge v_r\\
&=\sum_{j,r,s} (2g_s^{B_\k}-1)p_{\k_s}(v_j\wedge v_r)\wedge v_j\ \wedge v_r.
\end{align*}

We now compute $p_{\k_s}(v_j\wedge v_r)$ explicitly. We extend $\langle\cdot,\cdot\rangle$ to $\bigwedge^2V$ by determinants. 
Note that
\begin{align*}\langle \tau^{-1}(X_i),\tau^{-1}(X_j)\rangle&=\frac{1}{16}\sum_{r,s}det\begin{pmatrix}\langle X_i(v_r),X_j(v_s)\rangle& \langle X_i(v_r),v_s\rangle\\\langle v_r,X_j(v_s)\rangle& \langle v_r,v_s\rangle\end{pmatrix}\\
&=\frac{1}{16}\sum_{r}(\langle X_i(v_r),X_j(v_r)\rangle-\langle v_r,X_j(X_i(v_r))\rangle)\\&=-\frac{1}{8}\sum_{r}\langle X_j(X_i(v_r)),v_r\rangle=-\frac{1}{8}tr(X_jX_i).
\end{align*}
 
 Recall that $(X,Y)=\half tr_V(XY)$, hence, if $X_i,X_j \in \k_s$, then $tr_V(X_jX_i)=2(X_j,X_i)=2n_sB_\k(X_j,X_i)$, so that 
 $$\langle \tau^{-1}(X_i),\tau^{-1}(X_j)\rangle=-\frac{1}{4}n_s\delta_{ij};
 $$
  therefore
\begin{align*}
 p_{\k_s}(v_j \wedge v_r)=&-\frac{4}{n_s}\sum_{X_t\in\k_s}\langle v_j\wedge v_r,\tau^{-1}(X_t)\rangle \tau^{-1}(X_t)=-\frac{1}{n_s} \sum_{t,k}\langle v_j\wedge v_r,X_t(v_k)\wedge v_k\rangle \tau^{-1}(X_t)\\
 &= -\frac{1}{n_s}\sum_{t,k}det\begin{pmatrix}\langle v_j,X_t(v_k)\rangle& \langle v_j,v_k\rangle\\\langle v_r,X_t(v_k)\rangle& \langle v_r,v_k\rangle\end{pmatrix} \tau^{-1}(X_t)=- \frac{2}{n_s}\sum_{t}\langle v_j,X_t(v_r)\rangle \tau^{-1}(X_t).
\end{align*}
 
 Substituting we find that
$$
\sum_i \tau^{-1}(X_i)\wedge \tau^{-1}(X_i)=\frac{1}{16}\sum_{s}\frac{2g_s^{B_\k}-1}{n_s}\sum_{j,r=1}^{\dim V}(\sum_{X_t\in \k_s} \langle v_j,X_t(v_r)\rangle \tau^{-1}(X_t)\wedge v_j\wedge v_r)
$$
By \eqref{ennej}, we have $\frac{2g_s^{B_\k}-1}{n_s}=-2k$, hence
\begin{align*}
\sum_i \tau^{-1}(X_i)\wedge \tau^{-1}(X_i)&=-\frac{k}{32}\sum_{s}\sum_{j,r=1}^{\dim V}(\sum_{X_t\in \k_s} \langle v_j,X_t(v_r)\rangle \tau^{-1}(X_t)\wedge v_j\wedge v_r)\\
&=-\frac{k}{32}\sum_{j,r=1}^{\dim V}(\sum_{t=1}^{\dim \k} \langle v_j,X_t(v_r)\rangle \tau^{-1}(X_t)\wedge v_j\wedge v_r)\\
&=-\frac{k}{32}\sum_{j,r,t,k} \langle v_j,X_t(v_r)\rangle X_t(v_k)\wedge v_k\wedge v_j\wedge v_r\\
&=-\frac{k}{32}\sum_{r,t,k}  X_t(v_k)\wedge v_k\wedge X_t(v_r)\wedge v_r\\&=-\frac{k}{2}\sum_t \tau^{-1}(X_t)\wedge \tau^{-1}(X_t).
\end{align*}

Since  $k\in\ganz_+$, we have $\sum_t \tau^{-1}(X_t)\wedge \tau^{-1}(X_t)=0$ as desired.
\end{proof}

\begin{lemma}\label{invariants}
If $\widetilde V(k,\k)$ embeds conformally in $V_k(so(V))$ with $k\in \ganz_+$,  then
$V^\k=\{0\}$.
\end{lemma}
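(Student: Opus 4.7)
The plan is to argue by contradiction, showing that a nonzero $\k$-invariant in $V$ would force $\widetilde V(k,\k)$ to have strictly larger central charge than allowed by the conformal embedding into $V_k(so(V))$. Suppose $V^\k \neq \{0\}$; since $\nu$ is faithful and $K$ is nontrivial, also $V^\k \neq V$. As $\k$ is reductive, $V$ decomposes into $\k$-isotypic components and the nondegenerate $\k$-invariant form pairs $V_\mu$ with $V_{\mu^*}$; the trivial representation being self-dual, the form restricts nondegenerately to $V^\k$. Hence $V = V^\k \oplus V'$ orthogonally with $V' := (V^\k)^\perp$, and since $\k$ acts trivially on $V^\k$, the embedding $\nu$ factors as $\k \hookrightarrow so(V') \hookrightarrow so(V)$. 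Necessarily $\dim V' \geq 2$: otherwise $so(V') = 0$ would force $\k = 0$, contradicting the nontriviality of $K$.

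For $X,Y \in so(V')$ (extended by zero on $V^\k$) one has $\tfrac{1}{2}\mathrm{tr}_V(XY) = \tfrac{1}{2}\mathrm{tr}_{V'}(XY)$, so the inclusion $so(V') \hookrightarrow so(V)$ preserves the normalized invariant forms. Hence the subalgebra $\widetilde V(k, so(V'))$ of $V_k(so(V))$ generated by $\{x_{(-1)}\vac : x \in so(V')\}$ is an affine vertex algebra at level $k$ with its own Sugawara conformal vector $\omega_{so(V')}$ of central charge $c(so(V'),k)$, and we obtain
\[
\widetilde V(k,\k) \subset \widetilde V(k, so(V')) \subset V_k(so(V)).
\]
At positive integer level $k$, $V_k(so(V))$ is unitary, hence so are its subalgebras. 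By the Goddard--Kent--Olive coset construction, $\omega_{so(V')} - \omega_\k$ is a Virasoro element commuting with $\widetilde V(k,\k)$ of central charge $c(so(V'),k) - c(\k,k)$, which is nonnegative by unitarity. Similarly, $\omega_{so(V)} - \omega_{so(V')}$ is a Virasoro element of $V_k(so(V))$; it is \emph{nonzero}, because $\omega_{so(V)}$ has Sugawara summands over generators in $\bigwedge^2 V^\k \oplus (V^\k\otimes V')$ lying outside $so(V')$, and a nonzero Virasoro element in a unitary VOA must have strictly positive central charge, giving $c(so(V),k) > c(so(V'),k)$.

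Combining the two inequalities with the conformal embedding hypothesis $\omega_\k = \omega_{so(V)}$, which forces $c(\k,k) = c(so(V),k)$, one obtains
\[
c(so(V),k) = c(\k,k) \leq c(so(V'),k) < c(so(V),k),
\]
a contradiction; hence $V^\k = \{0\}$. The main obstacle is the validity of the coset construction for the (possibly non-simple) subalgebras $\widetilde V(k,\k)$ and $\widetilde V(k,so(V'))$ of $V_k(so(V))$, together with the strict positivity of the $so(V') \subsetneq so(V)$ coset central charge; both follow from standard unitarity considerations at positive integer level, which may alternatively be replaced by the direct monotonicity $c(so(n),k) > c(so(m),k)$ for $n>m\geq 2$ and $k\in\ganz_+$, obtained from the explicit formula (with care when $so(V')$ is not simple, e.g., $so(4)\cong sl_2\oplus sl_2$).
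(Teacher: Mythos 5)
Your argument is correct, but it takes a genuinely different route from the paper's. The paper does not compare central charges at all: it uses that for $k\in\ganz_+$ the integrable module $L(k\Lambda_0+\omega_1)$, whose top component is $V$, is a $V_k(so(V))$--module (by Frenkel--Zhu); a vector of $V^\k$ is then a $\widetilde V(k,\k)$--singular vector of trivial $\k$--weight, so $(\omega_\k)_0$ annihilates it, while $(\omega_{so(V)})_0$ acts on the whole top component by the scalar $\tfrac{\dim V-1}{2(k+\dim V-2)}$, which is nonzero unless $\dim V=1$ (excluded since $K$ is nontrivial) --- contradicting $\omega_\k=\omega_{so(V)}$. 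You instead interpose $so(V')$ with $V'=(V^\k)^\perp$ and derive the contradiction $c(so(V),k)=c(\k)\le c(so(V'),k)<c(so(V),k)$: the first inequality is the GKO/unitarity argument that the paper itself uses in the forward direction of the Symmetric Space Theorem, and the strict inequality is best taken from your explicit monotonicity $\tfrac{k\binom{m}{2}}{k+m-2}<\tfrac{k\binom{n}{2}}{k+n-2}$ for $2\le m<n$ and $k\ge 1$ (which, with the induced levels tracked, also handles the non-simple cases $so(2)$, $so(3)$, $so(4)$), since your assertion that $\omega_{so(V)}-\omega_{so(V')}\ne 0$ is not justified as stated and is the one soft spot if used alone. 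What each approach buys: yours stays entirely inside the vacuum module and avoids invoking the classification of $V_k(so(V))$--modules, at the cost of the small-rank bookkeeping for $so(V')$; the paper's is shorter once one accepts that $L(k\Lambda_0+\omega_1)$ is a $V_k(so(V))$--module. Both hinge essentially on integrability/unitarity at positive integer level.
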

\begin{proof}
It is well known that $V$ is the irreducible module for $so(V)$ with highest weight the fundamental weight $\omega_1$ (ordering the  simple roots of $so(V)$ in the standard way). Next we notice that if $k\in\ganz_+$, then $L(k\Lambda_0+\omega_1)$ is an integrable $\widehat{so(V)}$--module of level $k$. Since the category of  $V_k(so(V))$--modules coincides with the integrable, restricted $\widehat{so(V)}$--module of level $k$ (cf. \cite{FZ}), we conclude that  $L(k\Lambda_0+\omega_1)$ is a $V_k (so(V))$--module.  
The top component of  $L(k\Lambda_0+\omega_1)$ is isomorphic to $V$ as a $so(V)$-module. Since the vectors in $V^\k$ are $\widetilde V(k,\k)$-singular vectors in  $L(k\Lambda_0+\omega_1)$, $(\omega_\k)_{0}$ acts trivially on them. Since $(\omega_{so(V)})_0$ acts as 
$$ \frac{ \langle \omega_1 , \omega _1 + 2 \rho \rangle }{2 (k + h^{\vee} )} I_V = \frac{\dim V-1}{2(k+\dim V-2)}I_V$$ on $V$  we have that either $\dim V=1$ or $V^\k=\{0\}$. The former possibility is excluded since it implies $so(V)=\{0\}$, thus $\k=\{0\}$. We are assuming that $K$ is connected and non-trivial, hence this is not possible.
\end{proof}



We are now ready to prove the converse implication of the Symmetric Space Theorem:
\begin{prop}
Assume that $\widetilde V(k,\k)$ embeds conformally in $V_k(so(V))$ with $k\in \ganz_+$. 
Then $k=1$, there is a Lie algebra structure on $\mathfrak r=\k\oplus V$ making $\mathfrak r$ semisimple, $(\mathfrak r,\mathfrak k)$ is a symmetric pair, and a nondegenerate invariant form on $\mathfrak r$ is given by the direct sum of $B_\k$ and $\langle\cdot,\cdot\rangle$.
\end{prop}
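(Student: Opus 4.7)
The strategy is to apply Kostant's criterion, using Lemma~\ref{critKost} as the key vanishing. By \cite[Theorem 1.59]{Kost}, the pair $(\mathfrak r=\k\oplus V,\k)$ is symmetric with invariant form $B_\k\oplus\langle\cdot,\cdot\rangle$ if and only if $\nu_*(C^{B_\k}_\k)\in\C\subset Cl(V)$. Decomposing $\nu_*(C^{B_\k}_\k)\in Cl^{\le 4}(V)$ into its exterior degree $0$, $2$, and $4$ components, Lemma~\ref{critKost} gives the vanishing of the degree--$4$ piece (since that piece is exactly $\sum_i\tau^{-1}(X_i)\wedge\tau^{-1}(X_i)$), so the crux is the degree--$2$ piece. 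This is computed by expanding $\sum_i\nu_*(X_i)\cdot\nu_*(X_i)$ via the Clifford relations $v_iv_j+v_jv_i=2\delta_{ij}$ and collecting single-contraction terms: each contribution turns out to be an antisymmetric wedge sum $\sum_{a,b}M_{ab}\,v_a\wedge v_b$ whose coefficient matrix $M$ is, after reindexing, an expression built from the entries of the endomorphism $C^{B_\k}_\k$ acting on $V$. Since each $\nu(X_i)\in so(V)$ is antisymmetric with respect to $\langle\cdot,\cdot\rangle$, the operator $C^{B_\k}_\k|_V=\sum_i\nu(X_i)^2$ is symmetric; hence every such $M$ is symmetric and the antisymmetric wedge sum vanishes. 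Consequently $\nu_*(C^{B_\k}_\k)\in\C$, and Kostant's theorem yields a Lie bracket on $\mathfrak r$ making $(\mathfrak r,\k)$ a symmetric pair with the claimed invariant form.

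For semisimplicity of $\mathfrak r$, observe that $Z(\mathfrak r)\cap\k=0$ by faithfulness of $\nu$ and $Z(\mathfrak r)\cap V\subset V^\k=\{0\}$ by Lemma~\ref{invariants}, so $Z(\mathfrak r)=0$. Since $\k$ is reductive, any ideal $\mathfrak i\subset\mathfrak r$ is $\k$--stable and therefore splits as $(\mathfrak i\cap\k)\oplus(\mathfrak i\cap V)$. Combining $[V,V]\subset\k$ from the symmetric pair structure with nondegeneracy of $\langle\cdot,\cdot\rangle$ on $V$ and triviality of $Z(\mathfrak r)$ rules out non-zero solvable ideals, giving semisimplicity.

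Finally, $k=1$ is obtained by matching the scalar value of $\nu_*(C^{B_\k}_\k)$ established above---computable intrinsically from the Killing form of the now-identified semisimple $\mathfrak r$---with the conformal weight under which $(\omega_{so(V)})_0$ acts on the top component $V$ of the integrable module $L(k\Lambda_0+\omega_1)$. Indeed, our normalization $2(n_jk+g^{B_\k}_j)=1$ turns $(\omega_\k)_0|_V$ into a concrete function of $k$, while the proof of Lemma~\ref{invariants} gives $(\omega_{so(V)})_0|_V=(\dim V-1)/(2(k+\dim V-2))\cdot I_V$; equating the two forces $k=1$, in agreement with the direct implication proved at the start of the section. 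The decisive technical point throughout is the degree--$2$ cancellation in the first paragraph, which pivots on the symmetry of $C^{B_\k}_\k|_V$; once it is in place, Kostant's theorem and standard invariant-theoretic arguments dispatch the remaining conclusions.
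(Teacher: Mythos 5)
Your construction of the symmetric pair follows the paper's route: the degree--$4$ component of $\nu_*(C^{B_\k}_\k)$ vanishes by Lemma~\ref{critKost}, and the degree--$2$ component vanishes because the single--contraction terms assemble into $\sum_{a,b}(X_i^2)_{ab}\,v_a\wedge v_b$ with $X_i^2$ symmetric --- the paper simply cites \cite[Proposition 1.37]{Kost} for this, so your computation is a correct re-proof of a quoted fact. Two caveats on the structural part. First, your claim that every ideal of $\r$ splits as $(\mathfrak i\cap\k)\oplus(\mathfrak i\cap V)$ does not follow from $\k$--stability alone (a $\k$--constituent of $V$ isomorphic to one of $\k$ could sit diagonally); it does hold for the radical, which is $\sigma$--stable. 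Second, ``nondegenerate invariant form plus trivial center rules out solvable ideals'' is not a theorem (oscillator algebras are solvable, quadratic, and can be arranged with trivial center after quotienting issues aside --- the point is the implication needs an argument). The paper avoids both issues by invoking \cite[Theorem 1.60]{Kost}, whose hypothesis $V^\k=\{0\}$ is exactly what Lemma~\ref{invariants} supplies; you have that input, so this part is repairable.

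The genuine gap is the proof that $k=1$. The identity $(\omega_\k)_0|_V=(\omega_{so(V)})_0|_V$ that you propose to ``equate'' is an automatic consequence of conformality at level $k$: both sides are the zero mode of the \emph{same} conformal vector of $V_k(so(V))$ acting on the $V_k(so(V))$--module $L(k\Lambda_0+\omega_1)$, so the equation holds at any conformal level and cannot by itself single out $k=1$. To extract a constraint you would have to compute $(\omega_\k)_0|_V=\sum_j\tfrac{1}{2(k+g_j)}\,C_{\k_j}|_V$ independently of the embedding and then show that the resulting rational equation in $k$ admits no positive integer solution other than $1$; you assert this (``equating the two forces $k=1$'') but do not carry it out, and it is not obvious when $\k$ has several ideals with distinct $g_j$, nor is the ``scalar value of $\nu_*(C^{B_\k}_\k)$'' ever actually computed from the Killing form of $\r$. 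The paper's argument is different and self-contained: conformality puts $\omega_{so(V)}-\omega_\k$ in the maximal ideal of $V^k(so(V))$, hence $V^k(so(V))$ contains a singular vector of conformal weight $2$, which forces
\begin{equation*}
\frac{(2\rho_{so(V)}+\l,\l)}{2(k+\dim V-2)}=2
\end{equation*}
for some $\l$ that is zero or a sum of at most two roots of $so(V)$; the bounds $(\rho_{so(V)},\a)\le \dim V-3$ and $\Vert\l\Vert^2\le 8$ then give $k\le 1$, whence $k=1$ since $k\in\ganz_+$. You need this (or an equivalent) argument to close the proof.
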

\begin{proof}In \cite[Proposition 1.37]{Kost}
 it is shown that  $\nu_*(C_\k)$ might have nonzero components only in degrees $0,4$ w.r.t. the standard grading of $\bigwedge V \cong Cl(V)$.
Recall that, if $y\in V$ and $w\in Cl(V)$, then $y\cdot w=y\wedge w+i(y)w$, hence 
 $\nu_*(C_\k)=\sum_i \tau^{-1}(X_i)\wedge \tau^{-1}(X_i)+a$ with  $a\in\wedge^0V=\C$. 
By Lemma \ref{critKost}, we have that $\sum_t \tau^{-1}(X_t)\wedge \tau^{-1}(X_t)=0$, hence $\nu_*(C_\k)\in\C$. By \cite[Theorem 1.50]{Kost}, $\mathfrak r=\k\oplus V$ has the structure of a Lie algebra and $B_\k\oplus\langle\cdot,\cdot\rangle$ defines a nondegenerate invariant form on $\mathfrak r$. Moreover, by \cite[Theorem 1.59]{Kost}, the pair $(\mathfrak r,\mathfrak k)$ is a symmetric pair. By Lemma \ref{invariants} we have $V^\k=\{0\}$. We can therefore apply \cite[Theorem 1.60]{Kost} and deduce that $\r$ is semisimple.
Finally, we have to show that $k=1$. If the embedding is conformal then $\omega_{so(V)}-\omega_\k$ is in the maximal ideal of  $V^k(so(V))$, hence there must be a singular vector  in $V^k(so(V))$ of conformal weight $2$. This implies that there is  $\l$, with $\l$ either zero or a sum of at most two roots of $so(V)$, such that
$$
\frac{(2\rho_{so(V)}+\l,\l)}{2(k+\dim V-2)}=2.
$$
Here $\rho_{so(V)}$ is a Weyl vector for $so(V)$. The previous equation can be rewritten as
$$
\frac{(2\rho_{so(V)}+\l,\l)-2(k+\dim V-2)}{k+\dim V-2}=2.
$$
 Since  $(\rho_{so(V)},\a)\le \dim V-3$  for any root $\a$ and $\Vert \l\Vert^2\le 8$, we see that the above equality implies that $
\frac{2(\dim V-2)+4-2k}{k+\dim V-2}\ge 2$, so $k\le 1$.

\end{proof}

\section{Classification of equal rank conformal embeddings}\label{ERE}
It is natural to investigate conformal embeddings beyond the integrable case. 
  Recall \cite[Theorem 3.1]{AKMPP1} that if $\k$ is maximal equal rank in $\g$, then 
\begin{equation}\label{c}
\widetilde V(k,\k) \text{Ê is conformal in } V_k(\g)\iff c(\k)=c(\g) 
\end{equation} Here we provide a complete classification of conformal embeddings in the case when $rk(\k)=rk(\g)$,
making more precise both the statement and the proof of \cite[Proposition 3.3]{AKMPP1}.

  That result gives a numerical criterion to reduce the detection of  conformal embeddings from any subalgebra
  $\k$ reductive in $\g$ to a maximal one:
let 
  \begin{equation}\label{chain}
  \k=\k_0\subset\k_1\subset \k_2\subset\dots \subset\k_t=\g\end{equation} be a sequence of equal rank subalgebras with $\k_i$ maximal in $\k_{i+1}$. 
  Let $\k_i=\oplus_{j=0}^{n_i}\k_{i,j}$ be the decomposition of $\k_i$ into simple ideals 
  $\k_{i,j},\,j\geq 1$ and a center $\k_{i,0}$. Since $\k_{i-1}$ is maximal and equal rank in $\k_i$, 
  there is an index $j_0\geq 1$ such that 
 $\k_{i-1}=\oplus_{j\ne j_0}\k_{i,j}\oplus\tilde \k_{i-1}$ with $\tilde\k_{i-1}$ maximal in $\k_{i,j_0}$ (note that $\tilde\k_{i-1}$ is not simple, in general).  \par

  \begin{theorem}With notation as above, 
$\widetilde V(k,\k)$ is conformally embedded in $V_k(\g)$
 if and only if for any $i=1,\ldots,t$ we have
\begin{equation}\label{non maximal}
c(\tilde \k_{i-1})=c(\k_{i,j_0}).\end{equation}
\end{theorem}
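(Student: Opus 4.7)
The plan is to reduce to the maximal equal rank case established in \eqref{c} by iterating along the chain \eqref{chain}. First, at each step $i$, I would observe that the conformal embedding problem for $\widetilde V(k,\k_{i-1})\subset\widetilde V(k,\k_i)$ reduces to that for $\widetilde V(k,\tilde\k_{i-1})\subset\widetilde V(k,\k_{i,j_0})$: the simple ideals $\k_{i,j}$ for $j\ne j_0$ are common to $\k_{i-1}$ and $\k_i$, and the orthocomplement $\p_i$ of $\k_{i-1}$ in $\k_i$ lies inside $\k_{i,j_0}$, so these common ideals act trivially on $\p_i$ and contribute zero weights to the AP sum \eqref{numcheck}. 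Since $\tilde\k_{i-1}\subset\k_{i,j_0}$ is maximal equal rank in a simple Lie algebra, \eqref{c} applies directly, and conformality of the step is equivalent to $c(\tilde\k_{i-1})=c(\k_{i,j_0})$.

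The core of the argument is then to show, by induction on the length $t$ of the chain, that $\widetilde V(k,\k)\subset V_k(\g)$ is conformal if and only if each intermediate step $\widetilde V(k,\k_{i-1})\subset\widetilde V(k,\k_i)$ is conformal. The base case $t=1$ is exactly the reduction above. For the inductive step, it suffices to show that $\widetilde V(k,\k)\subset V_k(\g)$ is conformal iff both $\widetilde V(k,\k)\subset\widetilde V(k,\k_1)$ and $\widetilde V(k,\k_1)\subset V_k(\g)$ are conformal. The \emph{if} direction is immediate by transitivity: $\omega_\k=\omega_{\k_1}=\omega_\g$. For the \emph{only if}, I would use that the orthocomplement splits as $\p=\p_1\oplus(\g/\k_1)$ as a $\k$-module, so the AP equations $c_\k(\mu_i)=1$ for $\k\subset\g$ split into equations on the $\k$-irreducibles of $\p_1$ and those of $\g/\k_1$. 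The equations on $\p_1$ coincide verbatim with the AP criterion for the pair $\k\subset\k_1$ (same weights on simple ideals of $\k$, same induced levels $k_j$, determined by $k$ and the chosen form), so Theorem~\ref{AP} yields conformality of step 1; this in turn gives $\omega_\k=\omega_{\k_1}$ as elements of $V_k(\g)$, so their $L_0$-actions coincide throughout $V_k(\g)$, and the remaining AP equations for $\k\subset\g$ on $\g/\k_1$ become exactly the AP equations for $\k_1\subset\g$, proving its conformality. Combining with the inductive hypothesis applied to the shorter chain $\k_1\subset\dots\subset\k_t=\g$ closes the induction.

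The main obstacle I anticipate is ensuring that the AP criterion of Theorem~\ref{AP} applies in full strength to the intermediate embedding $\widetilde V(k,\k)\hookrightarrow\widetilde V(k,\k_1)$, where the ambient $\widetilde V(k,\k_1)$ is a (possibly non-simple) quotient of $V^k(\k_1)$ rather than the simple affine vertex algebra of a simple Lie algebra. The criterion is, however, fundamentally about the equality of the two Sugawara vectors $\omega_\k$ and $\omega_{\k_1}$ as elements of the ambient vertex algebra, and the equivalence with the numerical conditions \eqref{numcheck} should persist at this intermediate level. Once this is verified, each step's condition is translated into $c(\tilde\k_{i-1})=c(\k_{i,j_0})$ via the first reduction, and the theorem follows.
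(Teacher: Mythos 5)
Your necessity argument is essentially the paper's: one splits $\p=(\p\cap\k_{t-1})\oplus V$, observes that $\omega_\k=\omega_\g$ forces the AP equations on the components of $\p\cap\k_{t-1}$, and applies Theorem \ref{AP} to obtain a conformal embedding into the \emph{simple} quotient $V_k(\k_{t-1})$, whence the central charge identities follow by induction. (Note that only the numerical conditions are needed there, not your stronger intermediate claim that $\omega_\k=\omega_{\k_1}$ holds inside $V_k(\g)$.)

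The sufficiency direction, however, has a genuine gap at exactly the point you flag and then dismiss. Both \eqref{c} and the AP criterion are criteria for conformal embedding into a \emph{simple} affine vertex algebra: their proofs show that the difference of Sugawara vectors lies in the maximal ideal of the universal affine vertex algebra, and it is the simplicity of the ambient algebra that kills it. The intermediate algebra $\widetilde V(k,\k_i)\subset V_k(\g)$ is some quotient of $V^{\mathbf{k}}(\k_i)$ that need not be simple, and when it is not, $\omega_{\k_{i-1}}-\omega_{\k_i}$ can survive as a nonzero element of $\widetilde V(k,\k_i)$ even though the numerical conditions hold; so the assertion that ``the equivalence with the numerical conditions should persist at this intermediate level'' is precisely what can fail. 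This is not a hypothetical worry: Remark \ref{new44}(3) exhibits $\widetilde V(-4,A_1\times D_6)\subset V_{-4}(E_7)$ as a non-simple intermediate algebra arising in one of the relevant chains. Consequently the paper proves sufficiency by a case-by-case analysis: in most cases numerology on the admissible conformal levels forces $t=1$, so that \eqref{c} applies directly to $V_k(\g)$; in types $A$ and $C$ one invokes simplicity results for $\widetilde V(k,\k_{t-1})$ from \cite{AKMPP1}, which is what legitimizes the inductive transitivity argument; and for the handful of remaining chains (the $D_4$ chain \eqref{dett} and the listed exceptional chains) one verifies the AP criterion for the composite embedding $\k\hookrightarrow\g$ directly inside the simple algebra $V_k(\g)$. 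Your proposal needs one of these inputs --- simplicity of the intermediate algebras or a direct check of the composite AP equations --- to close.
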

 \begin{proof} 
 Assume first that $\widetilde V(k,\k)$ is conformally embedded in $V_k(\g)$.  We prove \eqref{non maximal} by induction on $t$. The base $t=1$ corresponds to $\k$ maximal in $\g$. Since $\omega_{\k}=\omega_{\g}$ it is clear that $c(\k)=c(\g)$ which is condition \eqref{non maximal} in this case.
 
 Assume now $t>1$. Since the form $(\cdot,\cdot)$ is nondegenerate when restricted to $\k_{t-1}$, the orthocomplement $\p$  of $\k$ in $\g$ can be written as $\p=\p\cap\k_{t-1}\oplus V$ with $V$ the orthocomplement of $\k_{t-1}$ in $\g$. Let $\overline V(k,\k)$ be the vertex subalgebra of $V_k(\k_{t-1})$ generated by $a_{(-1)}\vac$, $a\in\k$.  Since $\omega_\k=\omega_\g$, we have that $\omega_{\k}x_{(-1)}\vac=x_{(-1)}\vac$ for all $x\in \p\cap\k_{t-1}$, hence, by Theorem \ref{AP}, $\overline V(k,\k)$ is conformally embedded in $V_k(\k_{t-1})$, thus, by the induction hypothesis, \eqref{non maximal} holds for $i=1,\ldots,t-1$. In particular we have $c(\k)=c(\k_{t-1})$. Since $c(\k)=c(\g)$, we get \eqref{non maximal} also for $i=t$.
 \par


We show on case by case basis that condition \eqref{non maximal} is sufficient.\par\vskip5pt
 {\bf Type $A$.} Recall the following facts.
 \begin{enumerate}
 \item A maximal equal rank reductive subalgebra of $A_n$ is of type $A_{h-1}\times A_{n-h}\times Z$ with $Z$ a one-dimensional center.
 \item Possible  non-integrable conformal embeddings occurr at level $-1$ except when $n=1$ or $h=n-1$ and 
at level $-\frac{n+1}{2}$ except when $n=1$ or $h=n-h-1$ \cite[Theorem 5.1 (1)]{AKMPP1}.
 \item $\widetilde V_{-1}(A_{h-1}\times A_{n-h}\times Z)$ is simple if $n>5, h>2, n-h>1$ \cite[Theorem 5.1 (1)]{AKMPP1}.
 \item  $\widetilde V_{-1}(A_{2}\times A_{2}\times Z)$ is simple \cite[Theorem 5.2 (1)]{AKMPP1}.
 \item $\widetilde V_{-1}(A_{h-1}\times A_{n-h}\times Z)$ is simple if $h=2$ or  $n-h=1$  (see proof of 
 Theorem 5.3 (2) in \cite{AKMPP1}).
 \end{enumerate}
 Let $\k=\k_0\subset\k_1\subset \k_2\subset\dots \subset\k_t=\g$ be a chain as in \eqref{chain}. It is clear that \eqref{non maximal} implies $c(\k_i)=c(\k_{i-1})$. In particular
 $c(\k_{t-1})=c(\g)$, hence by (2) either $k=-\frac{n+1}{2}$ or $k=-1$. In the former case we claim that $t=1$. If by contradiction $t>1$, then  $c(\tilde\k_{t-2})=c(\k_{t-1,j_0})$. By (1), we know that $\k_{t-1}$ is of type $A_{r-1}$, hence $k=-1$ or $k=-r/2$. In turn $\frac{n+1}{2}=-1$ or $\frac{n+1}{2}=-r/2$. Both cases are impossible, since 
 $n>1$. Therefore $t=1$ and by    \eqref{c} $\widetilde V_{k}(\k_{t-1})$ embeds conformally in $V_k(\g)$.\par
Assume now that $k=-1$. Again by \eqref{c} we have that $\widetilde V_{k}(\k_{t-1})$ embeds conformally in $V_k(\g)$. But (3)-(5) imply that  $\widetilde V_{k}(\k_{t-1})= V_{k}(\k_{t-1})$ and we can proceed inductively.\par
\vskip5pt
{\bf Types $B, D$.} Recall that the maximal equal rank reductive  subalgebras of $so(n)$ are  $so(s)\times so(n-s)$ and $sl(n)\times Z\hookrightarrow so(2n)$. The only non-integrable conformal level is $2-n/2$ in the first case and $-2$ in the second. 
\par
Assume $n\ge 7, n\ne 8$. Consider a chain as in \eqref{chain}. The condition  $c(\k_{t-1})=c(\g)$ implies that $k=2-n/2$ or 
$\k_{t-1}$ is of type $A_{n/2-1}\times Z,$ $n$ even and $k=-2$. We claim that in both cases $t=1$.   If by contradiction $t>1$, then  $c(\tilde\k_{t-2})=c(\k_{t-1,j_0})$. Assume  $\k_{t-1}=so(s)\times so(n-s)$, we may assume that 
$\tilde\k_{t-1,j_0}$ is a simple component of $so(s)$; in particular $s\ge 3$. If  $s=3$  then we have equality of central charges only at level $1$, which implies $n=0$. If $s=4$ then 
$so(4)=sl(2)\times sl(2)$ and again $1=2-n/2$. If $s=5$ we have $-1/2=2-n/2$ hence $n=5$. If $s=6$, the level is either $-1$ or $-2$; correspondingly $2-n/2=-1$, i.e. $n=6$ of $2-n/2=-2$, i.e. $n=8$. If $s\ge 7$, then the level is 
$2-s/2=2-n/2$, which gives $s=n$, or $\tilde\k_{t-2}=sl(s/2)\times Z$. In this last case $-2=2-n/2$, or $n=8$.
So $t=1$. \par
In the other case $\tilde\k_{t-2}\subset A_{n-1}$; since the central charges are equal, we should have either
$-2=-1$ or $-n/2=-2$, which is not possible. So again $t=1$ and we can finish the proof using \eqref{c}.\par


Finally, in case $n=8$, we observe that the only possible chain is
\begin{equation}\label{dett}
A_2\times Z_1\times Z\subset A_3\times Z\subset D_4
\end{equation}
with $k=-2$ and $Z_1$, $Z$ one-dimensional subalgebras. We need only to check that the hypothesis of Theorem \ref{AP} hold in this case. We use the following setup: 
from the explicit realization
$$so(8,\C)=\left\{\begin{pmatrix} A & B \\ C & -A^t\end{pmatrix}\mid A,B,C\in gl(4,\C),\, B=-B^t,\,C=-C^t\right\}.$$
we have 
\begin{align*}
sl(4)\times Z&=\left\{\begin{pmatrix} A & 0 \\ 0 & -A^t\end{pmatrix}\mid A\in gl(4,\C)\right\}=\left\{\begin{pmatrix} A & 0 \\ 0 & -A^t\end{pmatrix}\mid A\in sl(4,\C)\right\}\oplus \C \begin{pmatrix} I & 0 \\ 0 & -I\end{pmatrix}
\\
sl(3)\times Z_1 &=\left\{\begin{pmatrix} A &0& 0&0 \\ 0& -tr(A) &0&0\\0&0& -A^t&0\\0&0&0&tr(A)\end{pmatrix}\mid A\in gl(3,\C)\right\}
\\&=\left\{\begin{pmatrix} A &0& 0&0 \\ 0& -tr(A) &0&0\\0&0& -A^t&0\\0&0&0&tr(A)\end{pmatrix}\mid A\in sl(3,\C)\right\}\oplus \C \begin{pmatrix} I &0& 0&0 \\ 0& -3 &0&0\\0&0& -I&0\\0&0&0&3\end{pmatrix}
\end{align*}

Let $(\cdot,\cdot)$ be the normalized invariant form of $so(8)$. With notation as in \eqref{numcheck}, in this case, we have $\k_1=sl(3)$, $\k_0=Z\times Z_1$, $(\cdot,\cdot)_1=(\cdot,\cdot)_{|sl(3)\times sl(3)}$ and, by definition, $(\cdot,\cdot)_0=(\cdot,\cdot)_{|\k_0\times \k_0}$. 

 Let $\varepsilon_i\in\h^*$ be defined by setting $\varepsilon_i(E_{j,j}-E_{4+j,4+j})=\d_{i,j}$, $1\le i,j\le 4$. Set $\eta\in Z^*$ be defined by setting $\eta(\begin{pmatrix} I & 0 \\ 0 & -I\end{pmatrix})=1$ and $\eta_1\in Z_1^*$ be defined by $\eta_1(\begin{pmatrix} I &0& 0&0 \\ 0& -3 &0&0\\0&0& -I&0\\0&0&0&3\end{pmatrix})=1$. 
It is clear that, as  $sl(4)\times Z$-module, $so(8,\C)=(sl(4,\C)\times Z)\oplus ( \bigwedge^2 \C^4)\oplus (\bigwedge^2 \C^4)^*$. As 
$gl(3)\times Z$-module we have
$$
  \bigwedge^2 \C^4=\bigwedge^2 \C^3\otimes V(2\eta)\oplus \C^3\otimes V(2\eta)
  $$
  and
  $$
  ( \bigwedge^2 \C^4)^*=(\bigwedge^2 \C^3)^*\otimes V(-2\eta)\oplus (\C^3)^*\otimes V(-2\eta) 
  $$
As $sl(3)\times Z_1\times Z$-module, we have, setting $\omega_1=\tfrac{2}{3}\varepsilon_1-\tfrac{1}{3}\varepsilon_2-\tfrac{1}{3}\varepsilon_3$ and $\omega_2=\tfrac{1}{3}\varepsilon_1+\tfrac{1}{3}\varepsilon_2-\tfrac{2}{3}\varepsilon_3$
  $$
  \bigwedge^2 \C^4=V_{sl(3)}(\omega_2)\otimes V(2\eta_1)\otimes V(2\eta)\oplus V_{sl(3)}(\omega_1)\otimes V(-2\eta_1)\otimes V(2\eta)
  $$
  $$
  ( \bigwedge^2 \C^4)^*=V_{sl(3)}(\omega_1)\otimes V(-2\eta_1)\otimes V(-2\eta)\oplus V_{sl(3)}(\omega_2)\otimes V(2\eta_1)\otimes V(-2\eta)
   $$
   
We extend $\eta$ and $\eta_1$ to $\h$ by setting
 $$
 \eta(\begin{pmatrix} I &0& 0&0 \\ 0& -3 &0&0\\0&0& -I&0\\0&0&0&3\end{pmatrix})=\eta_1(\begin{pmatrix} I & 0 \\ 0 & -I\end{pmatrix})=0,\quad \eta_{|\h\cap sl(3)}=(\eta_1)_{|\h\cap sl(3)}=0.
 $$ 
 Explicitly
 $$
 \eta=\frac{1}{4}\sum_{i=1}^4\varepsilon_i,\quad \eta_1=\frac{1}{12}\sum_{i=1}^3\varepsilon_i-\frac{1}{4}\varepsilon_4.
 $$
 Since $\begin{pmatrix} I & 0 \\ 0 & -I\end{pmatrix}$ and $\begin{pmatrix} I &0& 0&0 \\ 0& -3 &0&0\\0&0& -I&0\\0&0&0&3\end{pmatrix}$ are orthogonal and $(Z\times Z_1)^\perp=\h\cap sl(3)$, 
 then, for $\lambda=x\eta+y\eta_1,\mu=x'\eta+y'\eta_1\in (Z\times Z_1)^*$, we have
\begin{equation}
(\l,\mu)_0=(x\eta+y\eta_1,x'\eta+y'\eta_1).
\end{equation}
We are now ready to apply  Theorem \ref{AP} and check \eqref{numcheck}. Consider the component $V_{sl(3)}(\omega_2)\otimes V(2\eta_1)\otimes V(2\eta)$ of $\p$. Then, in this case, $\rho_0^1=2\epsilon_1-2\epsilon_3$ and $\rho_0^0=0$ so that
$$
\sum_{j=0}^1\frac{ ( \mu_{i}^j , \mu _{i}^j + 2 \rho_0^j ) _j}{ 2 (k_j + h_j
^{\vee} )} =\frac{ ( \omega_2 , \omega_2+ 2 \rho_0^1 ) }{ 2 (-2+ 3 )} +\frac{ ( 2\eta_1+2\eta , 2\eta_1+2\eta ) }{-4}=4/3 -1/3=1.$$
The other components of $\p$ are handled by similar computations. It follows from Theorem \ref{AP} that $sl(3)\times Z_1\times Z_2$ embeds conformally in $D_4$.

\vskip5pt
{\bf Type $C$.} Recall that the maximal equal rank reductive  subalgebras  are of type $C_h\times C_{n-h},\,
h\geq 1, n-h\geq 1$ or $A_{n-1}\times Z$. In both cases then  level $-1/2$ occurs, whereas in the first case we have also level $-1-n/2,\,h\ne n-h$.
Reasoning as in the previous cases, one deals with level $-1-n/2$. For the other level we can use Theorems 2.3, 5.1, 5.2 (2) from \cite{AKMPP1} to obtain the simplicity of $\tilde V(\k_i)$.
\vskip5pt
{\bf Exceptional types.}  In these cases, starting with a chain of subalgebras \eqref{chain}, we directly check that either
$t=1$, so that we can conclude by   \eqref{c},  or $t=2$ and we are in one of the following cases:
\begin{align*}
&A_1\times A_1\times A_1\times D_4\hookrightarrow A_1\times D_6\hookrightarrow E_7\quad&k=-4,\\
&A_1\times D_5\times Z\hookrightarrow A_1\times D_6\hookrightarrow E_7\quad&k=-4,\\
&A_1\times A_4\times Z\hookrightarrow A_1\times A_5\hookrightarrow E_6\quad&k=-3,\\
&A_1\times A_1\times A_3\times Z\hookrightarrow A_1\times A_5\hookrightarrow E_6\quad&k=-3,\\
&D_4\times Z \times Z\hookrightarrow D_5\times Z\hookrightarrow E_6\quad&k=-3,\\
&A_1\times A_1\times A_3\times Z\hookrightarrow D_5\times Z\hookrightarrow E_6\quad&k=-3,\\
&A_1\times A_1\times B_2\hookrightarrow A_1\times C_3\hookrightarrow F_4\quad&k=-5/2.\\
\end{align*}
We can then check the conformality of each composite embedding using the AP-criterion as done for \eqref{dett}.
 \end{proof}
 
 \begin{defi}
 
 We say that a chain of vertex algebras
 $U_1 \subset U_2 \subset \cdots \subset U_n$
 is conformal if $U_{i} $ is conformally embedded into $U_{i+1}$ for all $i=1, \dots , n-1$.
 \end{defi}
 
 \begin{rem} \label{new44}\ 
\begin{enumerate}
\item The conformality of the embedding given in \eqref{dett} can be derived in a different way using Lemma 10.4 of \cite{AKMPP3}.  More precisely,  result from \cite{AKMPP3} implies that  the vertex subalgebra $\widetilde V (-2, sl(4) \times Z)  $ of $V_{-2} (D_4)$ must be  simple. Since  $\widetilde V (-2, sl(3)  \times Z)  $  is a simple vertex subalgebra of $\widetilde V (-2, A_3 )  $  we actually have conformal chain of  {\sl simple} affine vertex algebras  
\bea  V_{-2} (A_2 ) \otimes M(1) \otimes M(1) \subset  V_{-2} ( A_3) \otimes M(1) \subset V_{-2} (D_4), \label{chainn} \eea
where $M(1)$ is the Heisenberg vertex algebra of rank one.

The determination of branching rules for conformal embeddings in \eqref{chainn} is an important open problem which is also recently  discussed  in the physics literature \cite[Section 5]{Ga}.
\item Similar  arguments can be applied in the cases $A_1\times A_1\times A_3\times Z\hookrightarrow D_5\times Z\hookrightarrow E_6$;  $D_4\times Z \times Z\hookrightarrow D_5\times Z\hookrightarrow E_6$ at $k=-3$. It was proved  in  \cite{AP-JAA}  that   $\widetilde V (-3 , D_5 \times Z\ )  $ is a  simple vertex algebra  and therefore, by equality of central charges,  we have  conformal chains  of vertex algebras
$$\widetilde V( -3, A_1\times A_1\times A_3 ) \otimes M(1) \subset V_{-3} (D_5) \otimes M(1) \subset V_{-3} (E_6),  $$
$$\widetilde V(-3, D_4 \times Z  ) \otimes M(1) \subset V_{-3} (D_5) \otimes M(1) \subset V_{-3} (E_6). $$
\item There are however cases where conformal embeddings in non-simple vertex algebras occur. This is quite possible (see \cite[Remark 2.1]{AKMPP3}).
One instance of this phenomenon occurs in $E_7$. By using similar arguments as in \cite[Section 8]{AKMPP3} one can show that the affine vertex subalgebra $\widetilde V (-4 , A _1\times D_6\ )  $  of  $V_{-4} (E_7) $ is not simple. We however checked  the conformality of the chains
$$\widetilde V(-4,A_1\times D_5\times Z ) \hookrightarrow \widetilde V (-4 , A _1\times D_6\ ) \hookrightarrow V_{-4}(E_7),$$
$$
\widetilde V(-4,A_1\times D_5\times Z) \hookrightarrow \widetilde V (-4 , A _1\times D_6\ ) \hookrightarrow V_{-4}(E_7) $$ using AP criterion.
\end{enumerate}
 \end{rem} 
 \section{Embeddings  at the critical level}\label{CL}
 \label{embd-crit}
 
 The classification of maximal conformal embeddings was studied in detail in \cite{AKMPP3}. On the other hand we detected in \cite{AKMPP3} some border cases where we can not speak of conformal embeddings since  the  embedded affine vertex subalgebras have critical levels. In this section we provide some results on this case. Instead of considering  equality of confomal vectors, we shall here consider  equality of central elements.  We prove that the embedded vertex subalgebra is the quotient of
 $( V^{-h^{\vee} } ({\g}_1 ) \otimes  V^{-h^{\vee}} ({\g}_2)    ) \slash   \langle s  \rangle $ where $s$ is a linear combination  of quadratic Sugawara central elements. 
 In particular  the embedded vertex subalgebra is not the tensor product of two affine vertex algebras. 
 
 It seems that the embeddings at the critical levels  were also investigated in physical literature. In particular, Y. Tachikawa presented in \cite{T} conjectures on existence of certain embeddings of affine vertex algebras at the critical level inside of larger vertex algebras.

 \subsection{ A criterion }
 We shall describe a   criterion for embeddings at the critical level.  
 \begin{proposition} \label{critical-criterion}
Assume that %
 %
there is a central element $s \in \bigotimes_jV^{k_j}  (\k_j) $ that  is not a scalar multiple of ${\vac}$ and, in $V^k(\g)$, 
$$ s_{(n)}  v  = 0 \quad \text{ for all } v \in  {\mathfrak p } \text{ and }n \ge 1. $$
 Then $s = 0$ in $V_k(\g)$. Moreover there is a non-trivial homomorphism of vertex algebras
 $$ \Phi^{(s)} :  \left(\bigotimes_jV^{k_j}  (\k_j) \right) \bigg/  { \langle s\rangle } \rightarrow V_k(\g).$$
 \end{proposition}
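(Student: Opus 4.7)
The plan is to show that the image of $s$ under the natural vertex algebra homomorphism $\phi\colon\bigotimes_j V^{k_j}(\k_j)\to V^k(\g)$ is a \emph{singular vector} in $V^k(\g)$, meaning $x_{(n)}\phi(s)=0$ for all $x\in\g$ and $n\ge 1$. Granted this, and using that one may assume $s$ has no $\vac$-component (after subtracting the scalar part, which is central and leaves the hypothesis unchanged), the element $\phi(s)$ has strictly positive conformal weight in $V^k(\g)$. The $\widehat\g$-submodule generated by $\phi(s)$ therefore lies in positive weights and is proper; it is hence contained in the maximal proper $\widehat\g$-submodule of $V^k(\g)$, which coincides with $\ker(V^k(\g)\to V_k(\g))$. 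Thus $s=0$ in $V_k(\g)$.

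To prove the singular vector property, split $\g=\k\oplus\p$. For $x\in\k$ the centrality of $s$ in $\bigotimes_j V^{k_j}(\k_j)$ gives $x_{(m)}s=0$ for all $m\ge 0$, which by Borcherds' commutator formula is equivalent to $[x_{(m)},\phi(s)_{(n)}]=0$ for all $m,n$. Applying $\phi(s)_{(n)}$ to $x=x_{(-1)}\vac$ and commuting $x_{(-1)}$ past $\phi(s)_{(n)}$ yields
\[
\phi(s)_{(n)}x=x_{(-1)}\phi(s)_{(n)}\vac=0,\qquad n\ge 0.
\]
For $x\in\p$ the hypothesis of the proposition gives $\phi(s)_{(n)}x=0$ for $n\ge 1$. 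Hence $\phi(s)_{(n)}x=0$ for all $x\in\g$ and $n\ge 1$, and the skew-symmetry identity
\[
x_{(n)}\phi(s)=\sum_{j\ge 0}\frac{(-1)^{n+j+1}}{j!}\,T^{j}\bigl(\phi(s)_{(n+j)}x\bigr)
\]
yields $x_{(n)}\phi(s)=0$ for every $n\ge 1$.

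For the second assertion, the composed vertex algebra homomorphism $\bigotimes_j V^{k_j}(\k_j)\to V^k(\g)\to V_k(\g)$ sends $s$ to $0$ by the first part, hence annihilates the vertex algebra ideal $\langle s\rangle$. It therefore factors through the quotient, producing the desired homomorphism $\Phi^{(s)}\colon\bigotimes_j V^{k_j}(\k_j)/\langle s\rangle\to V_k(\g)$, which is nontrivial because $\vac\mapsto\vac\neq 0$.

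The step requiring the most care is the translation of centrality of $s$ in the subalgebra into the singular-vector property for $\phi(s)$ in the ambient algebra. Centrality directly controls only $x_{(m)}s$ for $x\in\k$, whereas the hypothesis controls $\phi(s)_{(n)}v$ for $v\in\p$; combining these and then swapping the roles of $s$ and $x$ via skew-symmetry is the crux of the argument, and it works precisely because $\g=\k\oplus\p$ as a $\k$-module and the two pieces of information cover the two summands.
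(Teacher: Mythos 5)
Your argument is correct and follows essentially the same route as the paper: both proofs establish that $x_{(n)}s=0$ for all $x\in\g$ and $n\ge 1$ by using centrality for $x\in\k$ and the hypothesis $s_{(j)}x=0$ ($j\ge 1$) for $x\in\p$, and then conclude that $s$ lies in the maximal ideal of $V^k(\g)$ so that the map factors through $\langle s\rangle$. The only cosmetic difference is that you pass from $s_{(n)}x$ to $x_{(n)}s$ via skew-symmetry while the paper expands $x_{(n)}s=-[s_{(-1)},x_{(n)}]\vac$ with the commutator formula; your explicit reduction to the case where $s$ has no $\vac$-component is a small point the paper leaves implicit.
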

 \begin{proof}
 We need to prove that $s$ belongs to the maximal ideal in $V^k (\g)$. It suffices to prove that 
\bea \label{singular} x_{(n)} s = 0,\ n>0, \ x\in\g. \eea
  Since $s$ is in the center of $\bigotimes_jV^{k_j}  (\k_j) $, we get that
 $$ x_{(n)} s = 0 \quad  \text{ for all }  x \in \k, \ n \ge 0. $$
 Assume now that $x \in {\mathfrak p}$ and $n\ge 1$.  We have,
  $$ x_{(n)} s  = - [s_{(-1)}, x_{(n)}] {\bf 1} = - \sum_{j\ge0}\binom{-1}{j}(s_{(j)}x)_{(n-1-j)} {\bf 1} =-(s_{(0)}x)_{(n-1)} {\bf 1}= 0.  $$
 The claim follows.
 \end{proof}

\subsection {Embeddings for Lie algebras of type $A$}\label{AA}

Let $M_ {\ell} $ be the Weyl vertex algebra.
%
%
Recall that  the sympletic affine vertex algebra $V_{-1/2} (C_{\ell} ) $ is a subalgebra of $M_{\ell}$. This realization provides a nice framework for studying embeddings of affine vertex algebras in $V_{-1/2} (C_{\ell}) $. Some interesting cases were studied in \cite{AKMPP3}. In this section we study the embeddings  in $M_{\ell}$ of critical level affine vertex algebras. 
 
 Consider the case $\ell = n^2$. Then the embeddings $sl(n)\times sl(n)\hookrightarrow sl(\ell)$, $sl(n)\times sl(n)\times \C\hookrightarrow gl(\ell)$ induce  vertex algebra  homomorphisms   \bea \Phi :  V^{-n} (sl(n) ) \otimes V^{-n} (sl(n ) ) & \rightarrow &V _{-1}(sl(\ell ) ) \subset M_{\ell}, \nonumber   \\
\overline \Phi :  V^{-n} (sl(n) ) \otimes V^{-n} (sl(n ) ) \otimes M(1)  & \rightarrow &V _{-1}(gl(\ell ) ) \subset M_{\ell}.  \nonumber \eea

 We first want  to describe $\text{Im} (\Phi)$.  We shall see that, quite surprisingly,  $\mbox{Im} (\Phi)$ does not have the form $\widetilde V^{-n} (sl(n) ) \otimes \widetilde V^{-n} (sl(n ) )$, where  $\widetilde V^{-n} (sl(n) ) $ is a certain quotient of $V^{-n} (sl(n) )$.

Let $ \{x_i\} $, $ \{y_i\} $ be dual bases of $sl(n)$ with respect to the trace form, and let
$$ S=\sum_{i=1} ^{n^2-1} : x_i y_i: .
$$Then $S  $ is a central element of  the vertex algebra $V^{-n} (sl(n))$. Define $S_1 = S\otimes 1$, $S_2 = 1\otimes S$, so that
$$s := S_1-S_2\in V^{-n}(sl(n))\otimes V^{-n}(sl(n)).
$$

\begin{lemma}  \label{critical-emb}We have:
\begin{itemize}
\item[(1)] $\Phi(S_1) =\Phi(S_2)$ in $V_{-1} (sl(\ell)) \subset  M_{\ell}$.
\item[(2)]  $\Phi(S_1 )\ne 0$ in $V_{-1} (sl(\ell))$.
\end{itemize}

\end{lemma}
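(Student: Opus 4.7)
The plan is to derive (1) by invoking Proposition \ref{critical-criterion} with $s := S_1 - S_2$. Since $S$ is central in the critical-level algebra $V^{-n}(sl(n))$, the element $s$ is central in $V^{-n}(sl(n))\otimes V^{-n}(sl(n))$ and is evidently not a scalar multiple of $\vac$, so only the condition $s_{(m)}v = 0$ for $v\in\p$ and $m\ge 1$ remains to be verified; here $\p$ denotes the orthocomplement in $sl(\ell)$ of the image of $sl(n)\oplus sl(n)$.

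The first step is to identify the $sl(n)^{(1)}\oplus sl(n)^{(2)}$-module structure on $\p$. Writing $\C^{\ell} = \C^n\otimes\C^n$, the image of the two copies is $(sl(n)\otimes I)\oplus(I\otimes sl(n))$, and a direct calculation gives $\p\cong sl(n)\otimes sl(n)$, with $sl(n)^{(j)}$ acting by the adjoint representation on the $j$-th tensor factor and trivially on the other. Consequently the quadratic Casimir of each $sl(n)^{(j)}$, computed with respect to the normalized form on $sl(n)$, acts on $\p$ by the same scalar $2h^\vee_{sl(n)} = 2n$.

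Next I would evaluate $\Phi(S_j)_{(m)}(v_{(-1)}\vac)$ in $V^{-1}(sl(\ell))$ for $v\in\p$ using the standard mode expansion of $(:X_iY_i:)_{(m)}$. For $m\ge 3$ the output vanishes by conformal weight. For $m = 2$ the surviving terms collapse to a multiple of $(v,\sum_i[X_i,Y_i])\vac$, which is zero because $\sum_i[x_i,y_i] = 0$ for dual bases in the semisimple Lie algebra $sl(n)$. For $m = 1$ the Casimir piece $\sum_i [Y_i,[X_i,v]]_{(-1)}\vac$ contributes $2n\cdot v_{(-1)}\vac$, while the remaining level-dependent terms vanish because $\p$ is orthogonal to the images of both copies of $sl(n)$ with respect to the form on $sl(\ell)$. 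The crucial point is that the computation produces the \emph{same} output for $j=1$ and $j=2$, so $s_{(m)}v = 0$ and Proposition \ref{critical-criterion} yields $\Phi(s) = 0$ in $V_{-1}(sl(\ell))$, proving (1).

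For (2), the same $m = 1$ computation also shows $\Phi(S_1)_{(1)} v_{(-1)}\vac = 2n\cdot v_{(-1)}\vac$ for every $v\in\p$. Since $sl(\ell)$ embeds into the conformal weight one component of the simple affine vertex algebra $V_{-1}(sl(\ell))$, this operator is nonzero and so $\Phi(S_1)\ne 0$. The main obstacle throughout is the mode bookkeeping at $m = 1$: it is essential that the Casimir eigenvalues of the two subalgebras on $\p$ coincide, and it is precisely this coincidence — together with the vanishing $\sum_i[x_i,y_i]=0$ needed at $m=2$ — that morally forces the embedded vertex algebra to be a quotient by $s$ rather than a genuine tensor product.
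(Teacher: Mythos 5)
Your argument is correct and follows essentially the same route as the paper's own proof: both identify $\p\cong L_{sl(n)}(\theta)\otimes L_{sl(n)}(\theta)$, use the Casimir eigenvalue $(\theta,\theta+2\rho)=2n\ne 0$ to get $(S_i)_{(1)}v_{(-1)}\vac=2n\,v_{(-1)}\vac$ for $v\in\p$ (whence $\Phi(S_i)\ne 0$), and then apply Proposition \ref{critical-criterion} to $s=S_1-S_2$. The only difference is that you make explicit the vanishing of $s_{(m)}$ for $m\ge 2$ (conformal weight for $m\ge 3$, and $\sum_i[x_i,y_i]=0$ together with $\p\perp\k$ for $m=2$), which the paper leaves implicit.
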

\begin{proof}
First we notice that
$sl(\ell)  = sl(n) \times sl(n) \bigoplus  {\mathfrak p}$ where $\mathfrak{p} \simeq L_{sl(n)} (\theta) \otimes L_{sl(n)} (\theta) $ as a $sl(n)\times sl(n)$-module

Since  the quadratic Casimir operator acts  on $L_{sl(n)} (\theta)$ as
$$(\theta,\theta+2\rho)_{sl(n )}=2 n \ne 0, $$
we get, for $x \in \mathfrak p$,
$$ (S_i )_{(1)} x_{(-1)}{\bf 1} =  2n  x_{(-1)}{\bf 1}  \qquad (i=1,2). $$
This shows that $\Phi(S_i) \ne 0$ in $V_{-1} (sl(\ell))$. Then  $s = S_1 -S_2$ is a central element of  $V^{-n} (sl(n) ) \otimes  V^{-n} (sl(n ) )$ which satisfies  the conditions of Proposition  \ref{critical-criterion}. So $s = 0$ in $V_{-1} (sl(\ell)) $. The claim follows.
\end{proof}

\begin{remark}
 Note that the proof of  Lemma \ref{critical-emb}  is similar to the proof of  the AP  criterion. But instead of proving  the equality of conformal vectors, we prove  the equality of central elements.
\end{remark}
 \vskip5pt

Let $$\widetilde V(sl(n) \times sl(n) ) =\left( V^{-n} (sl(n) ) \otimes V^{-n} (sl(n ) )  \right)\big/ {    \langle s  \rangle }, $$
where $ \langle s \rangle$ is ideal in  $V^{-n} (sl(n) ) \otimes V^{-n} (sl(n ) )  $ generated by $s$.
Using Lemma \ref{critical-emb} we get:
\begin{prop} \label{str-A} We have:
\begin{itemize} 
\item[(1)] $ \text{\rm Im} (\Phi)$ is  a quotient of $\widetilde V(sl(n) \times sl(n) )$.
\item[(2)]   $ \text{\rm Im} (\Phi)$ is not simple.
\end{itemize}
\end{prop}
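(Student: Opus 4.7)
For part (1), I would simply observe that Lemma~\ref{critical-emb}(1) gives $\Phi(s)=\Phi(S_1)-\Phi(S_2)=0$, whence the ideal $\langle s\rangle$ is contained in $\ker\Phi$ and $\Phi$ factors through the quotient $\widetilde V(sl(n)\times sl(n))$, realizing $\mathrm{Im}(\Phi)$ as one of its quotients.

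For part (2), the strategy is to exhibit a nonzero central element in $\mathrm{Im}(\Phi)$ which is not a scalar multiple of the vacuum; non-simplicity then follows by a standard conformal-weight argument. Since $S_1$ is a quadratic Sugawara central element at the critical level $-n=-h^\vee(sl(n))$, it lies in the Feigin--Frenkel center of $V^{-n}(sl(n))$, and tensoring with $\vac$ on the second factor keeps it central in $V^{-n}(sl(n))\otimes V^{-n}(sl(n))$. Vertex algebra homomorphisms preserve centrality, so $\Phi(S_1)$ is central in $\mathrm{Im}(\Phi)$. By Lemma~\ref{critical-emb}(2), $\Phi(S_1)\neq 0$; and since $-1+\ell=n^2-1>0$, the Sugawara $L_0$ of $V_{-1}(sl(\ell))$ assigns $\Phi(S_1)$ conformal weight $2$, so $\Phi(S_1)\notin\C\vac$.

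To finish, I would verify that the ideal $I$ of $\mathrm{Im}(\Phi)$ generated by $\Phi(S_1)$ is a proper nonzero ideal. Centrality (together with skew-symmetry) forces $a_{(n)}\Phi(S_1)=0$ for all $a\in\mathrm{Im}(\Phi)$ and $n\geq 0$, while for $n<0$ the vector $a_{(n)}\Phi(S_1)$ has $L_0$-weight at least $2$. The Borcherds identity preserves this lower bound under iteration, so $I$ is contained in $\bigoplus_{k\geq 2}\mathrm{Im}(\Phi)_k$ and cannot contain $\vac$. Being nonzero, $I$ is proper, hence $\mathrm{Im}(\Phi)$ is not simple. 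The main obstacle is exactly this ideal-closure bookkeeping, but it is routine thanks to the nonnegative $L_0$-grading inherited from $V_{-1}(sl(\ell))$ (where $\vac$ is, up to scalar, the unique weight-zero vector); the substantive input is entirely packaged in Lemma~\ref{critical-emb}.
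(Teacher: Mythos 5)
Your proposal is correct and follows the paper's own route: the paper derives Proposition \ref{str-A} directly from Lemma \ref{critical-emb}, with part (1) coming from $\Phi(s)=0$ so that $\Phi$ factors through $\langle s\rangle$, and part (2) from the fact that $\Phi(S_1)$ is a nonzero central vector of positive conformal weight, hence generates a proper nonzero ideal. Your extra bookkeeping on the $L_0$-grading of the ideal generated by $\Phi(S_1)$ just makes explicit what the paper leaves implicit.
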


\subsection{A conjecture}
We will now present a conjecture on $ \mbox{Im} (\Phi)$.

Recall that the center of $V^{-n} (sl(n))$ (the  Feigin-Frenkel center)  is generated  by the central element 
$ S^{(2)} = S$ and by the higher rank Sugawara elements
$S^{(3)} , \dots, S^{(n)}$.
Define the following quotient of $ V^{-n} (sl(n) ) \otimes V^{-n} (sl(n ) ) $:
$$ \mathcal V(sl(n) \times sl(n) ) = \left(  V^{-n} (sl(n) ) \otimes V^{-n} (sl(n ) )\right) \big/    \langle s_2, s_3, \cdots, s_n  \rangle , $$
where
$s_k = S^{(k)} \otimes {\bf 1} - {\bf 1} \otimes S^{(k)}. $

\begin{conj}
$ \mbox{\rm Im} (\Phi) \cong \mathcal V(sl(n) \times sl(n) ) $.
\end{conj}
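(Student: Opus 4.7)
The conjecture amounts to the assertion that the kernel of $\Phi$ equals the ideal $\langle s_2,\dots,s_n\rangle$. I would split the argument into two parts: (a) show $\langle s_2,\dots,s_n\rangle\subseteq\ker\Phi$, giving a surjection of $\mathcal V(sl(n)\times sl(n))$ onto $\mbox{Im}(\Phi)$; (b) prove the reverse inclusion.

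For (a), I would apply Proposition \ref{critical-criterion} to each $s_k=S^{(k)}\otimes\vac-\vac\otimes S^{(k)}$, in direct analogy with Lemma \ref{critical-emb}, which handles $k=2$. Each $S^{(k)}$ lies in the Feigin--Frenkel center of $V^{-n}(sl(n))$, hence $s_k$ is central in $V^{-n}(sl(n))\otimes V^{-n}(sl(n))$. The remaining hypothesis of Proposition \ref{critical-criterion} is that $(s_k)_{(m)}v=0$ for every $v\in\p$ and every $m\ge 1$. This should hold because $\p\cong L_{sl(n)}(\theta)\boxtimes L_{sl(n)}(\theta)$, so the two Sugawara elements $S^{(k)}_1=S^{(k)}\otimes\vac$ and $S^{(k)}_2=\vac\otimes S^{(k)}$ act on $\p$ through one and the same $sl(n)$-module $L_{sl(n)}(\theta)$. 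A grading argument on affine degree shows that $(S^{(k)})_{(m)}$ annihilates $v_{(-1)}\vac$ for all sufficiently large $m$, while the remaining finitely many modes act through a scalar determined by the central character of $L_{sl(n)}(\theta)$ (equivalently, by the Harish--Chandra image of $S^{(k)}$ evaluated at $\theta+\rho$). Since this scalar is the same for $S^{(k)}_1$ and $S^{(k)}_2$, the difference $(s_k)_{(m)}v$ vanishes, and Proposition \ref{critical-criterion} gives $\Phi(s_k)=0$.

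For (b), which is the genuine obstacle, one must rule out further relations in $\mbox{Im}(\Phi)$ beyond those in $\langle s_2,\dots,s_n\rangle$. A natural route is character comparison. The Feigin--Frenkel freeness theorem presents $V^{-n}(sl(n))$ as a free module over its center, which yields an explicit formula for the graded character of $\mathcal V(sl(n)\times sl(n))$: intuitively one quotient kills one copy of the Feigin--Frenkel center. On the other hand, since $\mbox{Im}(\Phi)\subseteq V_{-1}(sl(\ell))\subseteq M_{n^2}$, its character may be read off from the Howe-dual decomposition of the Weyl vertex algebra $M_{n^2}$ under the commuting pair of $sl(n)$-actions arising from $sl(n)\times sl(n)\hookrightarrow sl(\ell)$. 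If these two characters agree, combined with (a) the conjecture follows. The hard part is precisely this step: exhibiting a PBW-type basis of $\mathcal V(sl(n)\times sl(n))$ that survives injectively under $\Phi$, or, equivalently, proving a suitable simplicity statement for $\mathcal V(sl(n)\times sl(n))$ relative to the pair $(sl(n)\times sl(n),sl(\ell))$, is nontrivial, and this is why the assertion is stated as a conjecture rather than a theorem.
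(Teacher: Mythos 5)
This statement is presented in the paper as a \emph{conjecture}: the paper offers no proof of it (it only proves the weaker Proposition \ref{str-A}, namely that $\mbox{Im}(\Phi)$ is a non-simple quotient of the algebra obtained by killing the single quadratic element $s=s_2$). So there is no proof in the paper to compare yours against, and the only question is whether your proposal actually settles the conjecture. It does not, and you essentially say so yourself: your step (b) --- showing that $\ker\Phi$ contains \emph{no more} than $\langle s_2,\dots,s_n\rangle$ --- is exactly the open content of the conjecture, and your proposal reduces it to an unproved character identity (freeness of $V^{-n}(sl(n))$ over its Feigin--Frenkel center versus a Howe-type decomposition of $M_{n^2}$) without establishing either side; in particular it is not clear that the vertex-algebra ideal $\langle s_2,\dots,s_n\rangle$ has the graded character of ``one copy of the center,'' since that ideal is generated under all modes of all fields, not just inside the polynomial ring of Sugawara elements.

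There is also a concrete gap in your step (a) for $k\ge 3$. To apply Proposition \ref{critical-criterion} to $s_k=S^{(k)}\otimes\vac-\vac\otimes S^{(k)}$ you must verify $(s_k)_{(m)}\,x_{(-1)}\vac=0$ for \emph{all} $m\ge 1$ and $x\in\mathfrak{p}$ (the proof of that proposition uses $s_{(j)}x=0$ for every $j\ge 1$, not just for $j$ large). Since $S^{(k)}$ has conformal weight $k$, the vector $(S^{(k)}\otimes\vac)_{(m)}x_{(-1)}\vac$ has conformal weight $k-m$, which is $\ge 2$ for $1\le m\le k-2$; such vectors need not be scalar multiples of $x_{(-1)}\vac$, so the ``same central character of $L_{sl(n)}(\theta)$'' argument, which only controls an eigenvalue on the top component, does not force the two copies to produce the same vector in those intermediate degrees. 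This is precisely why the paper's Lemma \ref{critical-emb} treats only the quadratic element, where the single relevant mode $(S)_{(1)}$ acts by the Casimir eigenvalue $2n$. Your outline is a sensible research plan, but both halves require genuinely new arguments before the conjecture can be regarded as proved.
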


\subsection{ Maximal embeddings in other simple Lie algebras.}  We now investigate whether similar phenomena as the one discussed in Section \ref{AA} can occur for other types. 

We list the maximal embeddings $\k  \hookrightarrow   \g$ with $\k$ semisimple but not simple such that there is $k\in\C$ with $k_j+h^\vee_j=0$ for all $j$:
\begin{enumerate}
\item $sp(2n) \times sp(2n) \hookrightarrow   so (4 n ^2)$ at level $k = -1-1/n$; 
\item  $so(n) \times so(n) \hookrightarrow   so (n ^2)$ at level $k = -1+2/n$;
\item $sp(2 n) \times so(2 n +2 ) \hookrightarrow   sp ( 4n (n+1) )$ at level $k = -1/2$; 
\item $so(n) \times so(n ) \hookrightarrow   so ( 2n) $ at level $k = 2- n$;
\item $sp(2n) \times sp(2n ) \hookrightarrow   sp (4n) $ at level $k = -1- n$;
 \item   $sl(5) \times sl(5 ) \hookrightarrow   E_8$ at level $k = -5$;
\item $sl(3)\times sl(3) \times sl(3 ) \hookrightarrow   E_6$ at level $k = -3$;
\end{enumerate}

Some of them  were detected in \cite[Theorem 4.1]{AKMPP3}. In this situation we have a
 homomorphism of vertex algebras
$$ \Phi : \bigotimes_jV^{-h^{\vee}_j } ({\k}_j )     \rightarrow V_{k}  (\g). $$

Write $\k=\oplus_{j=1}^t\k_i$ with $\k_i$ simple ideals. Let $\{x^j_i\},\{y^j_i \}$ be dual bases of $\k_j$ with respectt to $(\cdot,\cdot)_j$ and let
Let $S_j=\sum_i:x_i^jy_i^j:$ be the quadratic Sugawara central element in $V^{-h^{\vee}_j} ({\k}_j)$. Set $s_j=\vac\otimes \cdots \otimes S_j\otimes \cdots \otimes \vac$. Write 
$\p\oplus_i V_i$ for the complete decomposition of $\p$ as a $\k$-module. If $V_i=\otimes_j L_{\k_j}(\mu_{ij})$, set $\l_{ij}=(\mu_{ij},\mu_{ij}+\rho_0^j)_j$.

\begin{prop}\label{IM}If $z$ is a non zero vector  $(z_1,\cdots,z_t)\in\C^t$ such that $\sum_j\l_{ij}z_j=0$ for all  $i$,  set $s_z = \sum_j z_js_j$
and
 $$\widetilde V(\k) =\left(\otimes_j V^{-h^{\vee}_j } ({\k}_j )\right)\bigg/{    \langle s_z  \rangle }. $$
Then
$ \mbox{\rm Im} (\Phi)$ is  a non-simple  quotient of $\widetilde V(\k )$.
\end{prop}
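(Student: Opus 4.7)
The plan is to apply the critical-level criterion (Proposition \ref{critical-criterion}) to the element $s_z = \sum_j z_j s_j \in \bigotimes_j V^{-h^\vee_j}(\k_j)$ and then, for the non-simplicity claim, to exhibit a nonscalar central element in $\text{Im}(\Phi)$. I would first note that $s_z$ is central by construction: each $S_j$ is a Sugawara central element at the critical level $-h^\vee_j$, and centrality is preserved by taking tensor products. Moreover $s_z \neq 0$, because $z \neq 0$ and the $s_j$ are linearly independent in the tensor product. So the first hypothesis of Proposition \ref{critical-criterion} is in place.

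Next I would verify the action hypothesis $(s_z)_{(n)} v = 0$ in $V_k(\g)$ for every $v \in \p$ and every $n \geq 1$. Conformal weight accounting ($s_z$ of weight $2$, $v$ of weight $1$) puts $(s_z)_{(n)} v$ in weight $2-n$, which is negative and hence zero for $n \geq 3$. For $n = 1$ the operator $(s_z)_{(1)}$ preserves the weight-one subspace $\g$ of $V_k(\g)$, commutes with the $\k$-action, and on each irreducible summand $V_i$ of $\p$ acts by the scalar $\sum_j z_j \lambda_{ij}$, exactly as in the proof of Lemma \ref{critical-emb} (with $(S_j)_{(1)}$ contributing the $\k_j$-Casimir eigenvalue). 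This scalar vanishes by the hypothesis on $z$. For $n = 2$, $(s_z)_{(2)} v$ lands in the weight-zero part $\C\vac$ of $V_k(\g)$ and yields a $\k$-equivariant map $V_i \to \C\vac$; in each of the seven families in the list $\p$ decomposes into nontrivial irreducible $\k$-modules (bifundamentals or their analogs) without any trivial summand, so this map must vanish. Proposition \ref{critical-criterion} then delivers $s_z = 0$ in $V_k(\g)$, and $\Phi$ factors through $\widetilde V(\k)$ as required.

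For non-simplicity of $\text{Im}(\Phi)$, I would fix $j$ so that $\k_j$ acts nontrivially on some $V_i$, which is available in every listed case. Then $\lambda_{ij} \neq 0$ forces $(\Phi(S_j))_{(1)}$ to act nontrivially on $V_i$, and hence $\Phi(S_j) \neq 0$. Since $S_j$ is central in the tensor product, $\Phi(S_j)$ is central in $\text{Im}(\Phi)$, and having conformal weight $2$ it is not a scalar multiple of $\vac$. Finally, skew-symmetry combined with centrality gives $v_{(m)}\Phi(S_j) = 0$ for every $v \in \text{Im}(\Phi)$ and every $m \geq 0$, so the ideal generated by $\Phi(S_j)$ is contained in the subspace of conformal weights $\geq 2$ and does not contain $\vac$, giving a proper nonzero ideal. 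The main technical obstacle I expect is the $n = 2$ step, which rests on the case-by-case observation that $\p$ has no $\k$-invariants; this is routine from the explicit decomposition in each family, but it is where the structural input from the classification list is genuinely used.
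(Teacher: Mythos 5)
Your proposal is correct and follows essentially the same route as the paper: verify the hypotheses of Proposition \ref{critical-criterion} for the central element $s_z$ (with $(s_z)_{(1)}$ acting on each $V_i$ by $\sum_j z_j\l_{ij}=0$), and obtain non-simplicity from the fact that some $\Phi(s_j)$ is a nonzero non-scalar central element. The paper compresses all of this into two sentences (justifying $\l_{ij}\ne 0$ for some $i$ by noting that otherwise $\k_j$ would be an ideal of $\g$), whereas you spell out the weight-by-weight check of $(s_z)_{(n)}v=0$ for $n\ge 1$; the details you supply are consistent with the paper's intent.
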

\begin{proof}
Note that for all $j$ there is $i$ such that $\l_{ij}\ne 0$, for, otherwise, $\k_j$ would be an ideal in $\g$. It follows that $\Phi(s_j)\ne0$, so $ \mbox{\rm Im} (\Phi)$ is not simple. 
Since $s$ satisfies the hypothesis of Lemma \ref{critical-emb}, the proof follows.
\end{proof}

We now apply Proposition \ref{IM} to the cases listed above.
  
In case (1), as shown in the proof of Theorem 4.1 of \cite{AKMPP3} we have, as $sp(2n)\times sp(2n)$-module,
$$so(4n^2)=S^2(\C^{2n})\otimes \bigwedge^2\C^{2n}\oplus \bigwedge^2\C^{2n}\otimes S^2(\C^{2n})
$$
so 
$$
\p=\begin{cases}(L_{sp(2n)}(\theta)\otimes L_{sp(2n)}(\omega_2))\oplus(L_{sp(2n)}(\omega_2)\otimes L_{sp(2n)}(\theta))&\text{if $n>1$}\\
\{0\}&\text{if $n=0$}
\end{cases}.
$$
Let $\Lambda$ be the matrix $(\l_{ij})$. Then, if $n>1$, 
$$
\Lambda=\begin{pmatrix}2(n+1)&2n\\2n&2(n+1)\end{pmatrix}
$$ so the hypothesis of Proposition \ref{IM} are never satisfied.

Similarly, in case (2),  we have, as $so(n)\times so(n)$-module,
$$so(n^2)=\bigwedge^2\C^{n}\otimes S^2(\C^{n})\oplus S^2(\C^{n})\otimes \bigwedge^2\C^{n}
$$
so 
$$
\p=\begin{cases}L_{so(n)}(\theta)\otimes L_{so(n)}(2\omega_1)\oplus L_{so(n)}(2\omega_1)\otimes L_{so(n)}(\theta)&\text{if $n>4$}\\
L_{A_1}(\theta)^{\otimes3}\otimes \C\oplus L_{A_1}(\theta)^{\otimes2}\otimes \C\otimes L_{A_1}(\theta)\\
\hspace{3cm}\oplus L_{A_1}(\theta)\otimes \C\otimes L_{A_1}(\theta)^{\otimes2}\oplus \C\otimes L_{A_1}(\theta)^{\otimes3}&\text{if $n=4$}\\
L_{A_1}(\theta)\otimes L_{A_1}(2\theta)\oplus L_{A_1}(2\theta)\otimes  L_{A_1}(\theta)&\text{if $n=3$}
\end{cases}.
$$
Then, if $n>4$, 
$$
\Lambda=\begin{pmatrix}2(n-1)&2n\\2n&2(n-1)\end{pmatrix}$$
while, for $n=4$,
$$
\Lambda=\begin{pmatrix}4&4&4&0\\4&4&0&4\\4&0&4&4\\0&4&4&4\end{pmatrix}$$
and
$$
\Lambda=\begin{pmatrix}4&12\\12&4\end{pmatrix}$$
when $n=3$. In all cases the hypothesis of Proposition \ref{IM} are not satisfied.

In case (3) we have
$$sp(4n(n+1))=S^2(\C^{2n})\otimes S^2(\C^{2n+2})\oplus \bigwedge^2\C^{2n}\otimes \bigwedge^2\C^{2n+2}
$$
hence
$$
\p=\begin{cases}L_{sp(2n)}(\theta)\otimes L_{so(2n+2)}(2\omega_1)\oplus L_{sp(2n)}(\omega_2)\otimes L_{so(2n+2)}(\theta)&\text{if $n>2$}\\
L_{sp(4)}(\theta)\otimes L_{A_3}(2\omega_2)\oplus L_{sp(4)}(\omega_2)\otimes L_{A_3}(\theta)&\text{if $n=2$}\\
L_{A_1}(\theta)^{\otimes3}&\text{if $n=1$}
\end{cases}.
$$
 
 In the case  $n=1$,  we have that, if $(z_1,z_2, z_3) )\in\C^3\setminus\{(0,0,0)\}$ with $z_1+ z_2 + z_3 = 0$, then Proposition \ref{IM} holds.
 
 If   $n \ge 2$ then
 $$\L=\begin{pmatrix}2(n+1)&4(n+1)\\2n&4n\end{pmatrix} , $$
thus the hypothesis of Proposition \ref{IM} are satisfied with 
 $s=  s_1 - \frac{1}{2} s_2 $,  which acts trivially on both components of    $\p$.

In cases (4) and (5)  $\p$ is irreducible as $\k$-module, so, since $\k$ is not simple,  the hypothesis of Proposition \ref{IM} are clearly satisfied. 

In case (6) the decomposition of $\p$ as $\k$-module is
$\p=(L_{sl(5)}(\omega_1)\otimes L_{sl(5)}(\omega_3))\oplus (L_{sl(5)}(\omega_2)\otimes L_{sl(5)}(\omega_4))\oplus(L_{sl(5)}(\omega_3)\otimes L_{sl(5)}(\omega_1))\oplus(L_{sl(5)}(\omega_4)\otimes L_{sl(5)}(\omega_2))$ hence
$$
\Lambda=\begin{pmatrix}4&36/5\\36/5&4\\36/5&4\\4&36/5\end{pmatrix}
$$
so the hypothesis of Proposition \ref{IM} are not satisfied.

Finally in case (7) above, $\p=L_{sl(3)}(\omega_1)^{\otimes3}\bigoplus L_{sl(3)}(\omega_2)^{\otimes3} $ hence Proposition \ref{IM} holds: we can take
$s_z=z_1 s_1+z_2 s_2 + z_3s_3$ with $(z_1,z_2, z_3) )\in\C^3\setminus\{(0,0,0)\}$ such that $z_1+ z_2 + z_3 = 0$ and we obtain a  family of non-trivial homomorphisms
 $$ \Phi_ z:  V^{-2} (A_1 ) ^{\otimes 3} / \langle s_z \rangle \rightarrow V_{-3} (E_6). $$
 
 \begin{rem} \label{multi}
  For embedding
$so(n) \times so(n ) \hookrightarrow   so ( 2n) $ at level $k = 2- n$, the case $n=4$ is of particular interest.  We    have embedding $ sl(2) \times sl(2) \times sl(2) \times sl(2)  \hookrightarrow  so(8)$ at $k=-2$. Then  $\p=L_{sl(2)}(\omega_1)^{\otimes4} $ and we can  take 
$$s_z=z_1(S_1\otimes \vac\otimes \vac \otimes \vac )+z_2 (\vac\otimes S_2\otimes \vac \otimes \vac )+ z_3 (\vac\otimes\vac\otimes S_3\otimes \vac ) + z_4 (\vac\otimes\vac\otimes \vac \otimes  S_4) 
$$ 
 for every element $z= (z_1, \dots, z_4) \in {\C} ^{4}  \setminus\{(0,0,0,0)\}$ so that $z_1 + z_2 + z_3 + z_4 = 0$. We have a  family of non-trivial homomorphisms
 $$ \Phi_ z:  V^{-2} (A_1 ) ^{\otimes 4} / \langle s_z \rangle \rightarrow V_{-2} (D_4). $$
 
  Similarly, for the embedding $ sl(2) \times sl(2) \times sl(2)   \hookrightarrow  sp(8)$ at $k=-1/2$,  we get a family of homomorphism   $$V^{-2} (A_1 ) ^{\otimes 3} / \langle   s_z \rangle \rightarrow V_{-1/2} (C_4), $$
  where $z= (z_1, z_2, z_3) \in   {\C} ^{3}  \setminus\{(0,0,0)\}$,  and  $z_1+ z_2 + z_3 = 0$.
\end{rem}

\begin{rem}
Consider the embedding
$$ A_3 \times A_3 \times A_1 \hookrightarrow   D_6 \times A_1 \hookrightarrow   E_7 \quad \mbox{at} \ k= -4. $$
Then embedding  $A_3 \times A_3 \times A_1  \hookrightarrow   E_7 \quad \mbox{at} \ k= -4$ also  gives a subalgebra of $V_{-4} (E_7)$  at the critical level for $A_3 \times A_3$.

\end{rem}

\begin{rem} 
In \cite{T},  \cite{MT} some very interesting conjectures were presented. In particular, it was stated that vertex algebras for $V_{-2} (D_4), V_{-3} (E_6), V_{-4} (E_7), V_{-6} (E_8)$  are related to certain affine vertex algebras at the critical level. In the Remarks above  we have detected subalgebras at the critical level   inside  $ V_{-2} (D_4), V_{-3} (E_6), V_{-4} (E_7)$ which we believe are the vertex algebras appearing in    \cite[Conjecture 5]{T}.

We should say that a proof of the  conjecture from \cite{MT}  was announced in \cite{Ara-2018}. We hope that this new result can be used in describing decomposition of some embeddings at the critical level.

\end{rem}

\vskip3pt
 \footnotesize{
  \noindent{\bf D.A.}:  Department of Mathematics, Faculty of Science, University of Zagreb, Bijeni\v{c}ka 30, 10 000 Zagreb, Croatia;\newline
{\tt adamovic@math.hr}
  
\noindent{\bf V.K.}: Department of Mathematics, MIT, 77
Mass. Ave, Cambridge, MA 02139;
{\tt kac@math.mit.edu}

\noindent{\bf P.MF.}: Politecnico di Milano, Polo regionale di Como,
Via Valleggio 11, 22100 Como,\newline
Italy; {\tt pierluigi.moseneder@polimi.it}

\noindent{\bf P.P.}: Dipartimento di Matematica, Sapienza Universit\`a di Roma, P.le A. Moro 2,
00185, Roma, Italy;\newline {\tt papi@mat.uniroma1.it}

\noindent{\bf O.P.}:  Department of Mathematics, Faculty of Science, University of Zagreb, Bijeni\v{c}ka 30, 10 000 Zagreb, Croatia;\newline
{\tt perse@math.hr}
}

\end{document}